\newcommand\rurl[1]{%
  \href{https://#1}{\nolinkurl{#1}}%
}
\newtheorem{definition}{Definition}[section]
\newtheorem{theorem}[definition]{Theorem}
\newtheorem{lemma}[definition]{Lemma}
\newtheorem{example}[definition]{Example}
\newtheorem{corollary}[definition]{Corollary}
\newtheorem{proposition}[definition]{Proposition}
\newtheorem*{theorem*}{Theorem}
\newtheoremstyle{rem}
  {}
  {}
  {}            
  {}            
  {\bfseries}   
  {.}            
  { }    
  {}            
\theoremstyle{rem}
\newtheorem{remark}[definition]{Remark}
\def\N{{\mathbb N}}
\def\Z{{\mathbb Z}}
\def\R{{\mathbb R}}
\def\T{{\mathbb T}}
\def\C{{\mathbb C}}
\newcommand{\Rd}{{\R^d}}
\newcommand{\Cd}{{\C^d}}
\newcommand{\lt}{{L^2(\R)}}
\newcommand{\ltd}{{L^2(\R^d)}}
\newcommand{\ift}{{\mathcal{F}^{-1}}}
\newcommand{\ft}{{\mathcal{F}}}
\newcommand\thankssymb[1]{\textsuperscript{\@fnsymbol{#1}}}
\def\@makefnmark{%
  \leavevmode
  \raise.9ex\hbox{\fontsize\sf@size\z@\normalfont\tiny\@thefnmark}}
\def\bign#1{\mathclose{\hbox{$\left#1\vbox to8.5\p@{}\right.\n@space$}}\mathopen{}}
\newcommand\textoverset[3][]{\mathrel{\overset{\scriptsize{\eqmakebox[#1]{#2}}}{#3}}}
\begin{document}

\title[Phaseless sampling on square-root lattices]{Phaseless sampling on square-root lattices}
\author[Philipp Grohs]{Philipp Grohs\thankssymb{1}\textsuperscript{,}\thankssymb{2}\textsuperscript{,}\thankssymb{3}}
\address{\thankssymb{1}Faculty of Mathematics, University of Vienna, Oskar-Morgenstern-Platz 1, 1090 Vienna, Austria}
\address{\thankssymb{2}Research Network DataScience@UniVie, University of Vienna, Kolingasse 14-16, 1090 Vienna, Austria}
\address{\thankssymb{3}Johann Radon Institute of Applied and Computational Mathematics, Austrian Academy of Sciences, Altenbergstrasse 69, 4040 Linz, Austria}
\email{philipp.grohs@univie.ac.at}
\author[Lukas Liehr]{Lukas Liehr\thankssymb{1}}
\email{lukas.liehr@univie.ac.at}

\date{\today}
\maketitle

\begin{abstract}
Due to its appearance in a remarkably wide field of applications, such as audio processing and coherent diffraction imaging, the short-time Fourier transform (STFT) phase retrieval problem has seen a great deal of attention in recent years. A central problem in STFT phase retrieval concerns the question for which window functions $g \in \ltd$ and which sampling sets $\Lambda \subseteq \R^{2d}$ is every $f \in \ltd$ uniquely determined (up to a global phase factor) by phaseless samples of the form
$$
|V_gf(\Lambda)| = \left \{ |V_gf(\lambda)| : \lambda \in \Lambda \right \},
$$ 
where $V_gf$ denotes the short-time Fourier transform (STFT) of $f$ with respect to $g$. The investigation of this question constitutes a key step towards making the problem computationally tractable. However, it deviates from ordinary sampling tasks in a fundamental and subtle manner: recent results demonstrate that uniqueness is unachievable if $\Lambda$ is a lattice, i.e $\Lambda = A\Z^{2d}, A \in \mathrm{GL}(2d,\R)$. Driven by this discretization barrier, the present article centers around the initiation of a novel sampling scheme which allows for unique recovery of any square-integrable function via phaseless STFT-sampling. Specifically, we show that square-root lattices, i.e., sets of the form
$$
\Lambda = A \left ( \sqrt{\Z} \right )^{2d}, \ \sqrt{\Z} = \{ \pm \sqrt{n} : n \in \N_0 \},
$$
guarantee uniqueness of the STFT phase retrieval problem. The result holds for a large class of window functions, including Gaussians.
\end{abstract}

\section{Introduction}

Consider a window function $g \in \ltd$ and a set $\Lambda \subseteq \R^{2d}$ of sampling locations. Let $V_gf : \R^{2d} \to \C$ denote the short-time Fourier transform (STFT) of $f \in \ltd$ with respect to $g$, given by
$$
V_gf(x,\omega) = \int_{\Rd} f(t)\overline{g(t-x)}e^{-2\pi i \omega \cdot t} \, dt.
$$
The well-established theory in time-frequency analysis ensures that every $f \in \ltd$ is uniquely determined by the STFT-samples $\{ V_gf(\lambda) : \lambda \in \Lambda \}$, provided that the window function $g$ satisfies certain mild conditions, and $\Lambda = A\Z^{2d}, A \in \mathrm{GL}(2d,\R),$ is a sufficiently dense lattice. Consequently, square-integrable functions can be distinguished via sampling of their STFTs. Suppose now that merely the absolute values of the STFT-samples are given, i.e., the sampling set is of the form
\begin{equation}\label{eq:phaseless_samples}
    |V_gf(\Lambda)| \coloneqq  \left \{ |V_gf(\lambda)| : \lambda \in \Lambda \right \}.
\end{equation}
This leads to the phaseless sampling problem and the natural question whether a distinction of square-integrable functions is possible by considering the respective phaseless STFT samples as given in \eqref{eq:phaseless_samples}. The investigation of this problem is of utmost importance in a series of applications, prominent examples include coherent diffraction imaging \cite{appl5,appl7}, audio processing \cite{appl8}, and quantum mechanics \cite{appl1}. It is known as the \emph{uniqueness problem in STFT phase retrieval} and has received considerable attention in recent years. Since $|V_gf(\Lambda)|=|V_g(\tau f)(\Lambda)|$ whenever $\tau \in \T \coloneqq \{ z \in \C : |z|=1 \}$ is a complex number of modulus one, it is evident that a distinction from phaseless samples of the form \eqref{eq:phaseless_samples} is only achievable up to the ambiguity arising from multiplication by a number in $\T$.
To that end, we define for two functions $f,h \in \ltd$ the equivalence relation
$$
f \sim h \iff \exists \tau \in \T : f = \tau h,
$$
and say that $f$ and $h$ agree up to a global phase if $f \sim h$. Compared to conventional sampling of the STFT, the uniqueness problem posed by phaseless samples takes a fundamentally different direction. Recently established discretization barriers have revealed that if $\Lambda$ is a lattice, then functions within $\ltd$ cannot be uniquely determined by samples of the form of $|V_gf(\Lambda)|$, regardless of the lattice choice $\Lambda$ or the window function $g$ \cite{grohsLiehrJFAA,grohsLiehr4,alaifari2020phase}. Consequently, addressing the uniqueness problem in the absence of phase information calls for new techniques and alternative sampling schemes.

The objective of the present article centers around the derivation of phaseless sampling results from square-root lattices. Precisely, we determine a function space $\mathcal{O}_a^b(\Cd)$ (depending on a decay parameter $a$ and a growth parameter $b$) which consists of analytic functions on $\C^d$ such that the restriction to $\Rd$ of every $g \in \mathcal{O}_a^b(\Cd)$ is an element of $\ltd$. This function space has the following property: whenever $g$ belongs to $\mathcal{O}_a^b(\Cd)$, then every $f \in \ltd$ is determined up to a global phase by $|V_gf(\Lambda)|$, provided that $\Lambda$ is a square-root lattice, 
$$
\Lambda = A\left (\sqrt{\Z} \right )^{2d}, \quad \sqrt{\Z} \coloneqq \{ \pm \sqrt{n} : n \in \N_0 \}, \quad A \in \mathrm{GL}(2d,\R),
$$
which satisfies a generalized density condition (see Section \ref{sec:main_results} for further clarifications). Thus, sampling on ordinary lattices never yields uniqueness, while a replacement by a square-root lattice does the job.
It is worth mentioning that $\mathcal{O}_a^b(\Cd)$ contains Gaussians, the most important window functions in contemporary STFT phase retrieval research. Moreover, a suitable choice of $a$ and $b$ implies that $\mathcal{O}_a^b(\Cd)$ is dense in $\ltd$.

\subsection{Main results}\label{sec:main_results}

Recall that a lattice in $\Rd$ is a set of the form $A\Z^d$ for some invertible matrix $A \in \mathrm{GL}(d,\R)$. We call a subset $\Lambda \subseteq \Rd$ a \emph{square-root lattice} if there exists an invertible matrix $A \in \mathrm{GL}(d,\R)$ such that
$$
\Lambda = A\left (\sqrt{\Z} \right)^d = \left \{ Az : z \in \underbrace{\sqrt{\Z} \times \dots \times \sqrt{\Z}}_{d \ \mathrm{times}} \right \},
$$
where $\sqrt{\Z} \coloneqq \{ \pm \sqrt{n} : n \in \N_0 \}$. An example of a square-root lattice is depicted in Figure \eqref{fig:comparison}. In analogy to an ordinary lattice, the matrix $A$ is called the generating matrix of the square-root lattice. A square-root lattice $\Lambda$ is called rectangular if its generating matrix is diagonal. The present article establishes uniqueness results for the STFT phase retrieval problem via sampling on square-root lattices. The window functions which define the STFT and for which square-root sampling implies uniqueness belong to a space of analytic functions. Before defining this space, we settle some notation. Whenever $z \in \Cd$, we write $z=x+iy$, where $x \in \Rd$ is the real part of $z$, and $y \in \Rd$ is the imaginary part of $z$. For two functions $v,w : \Cd \to [0,\infty)$, we write $v \lesssim w$, if and only if there exists a positive constant $c>0$ such that $v(z) \leq c w(z)$ for every $z \in \Cd$.

The space of analytic functions we are dealing with will be denoted by $\mathcal{O}_a^b(\Cd)$. For $a,b \in \R_{>0}^d \coloneqq (0,\infty)^d$ it is defined by
$$
    \mathcal{O}_{a}^b(\C^d) \coloneqq \left \{  F \in \mathcal{O}(\C^d) : |F(x+iy)| \lesssim \prod_{j=1}^d e^{-a_j x_j^2}e^{b_jy_j^2} \right \},
$$
where $\mathcal{O}(\C^d)$ denotes the space of entire functions of $d$ complex variables. The function class $\mathcal{O}_{a}^b(\C^d)$ is a linear subspace of $\mathcal{O}(\Cd) \cap \ltd$ (the space of all entire functions which are square-integrable on $\Rd$), and if $a<b$ (i.e., $a_j<b_j$ for every $j \in \{ 1, \dots, d \}$), then $\mathcal{O}_{a}^b(\C^d)$ is dense in $\ltd$. Note that we will repeatedly identify entire functions with their restriction on $\Rd$, and vice versa, functions on $\Rd$ with their analytic extension on $\C^d$ (provided the extension exists). We refer to Section \ref{sec:function_space_properties} for a detailed discussion of properties of $\mathcal{O}_{a}^b(\C^d)$. Window functions which belong to the function space $\mathcal{O}_{a}^b(\C^d)$ lead to phaseless sampling results from square-root lattices. This is the content of the following theorem which constitutes the main result of the article.

\begin{theorem}\label{thm:main_result}
Let $a,b \in \R^d_{>0}$, and let $0 \neq \varphi \in \mathcal{O}^a_b(\C^d)$ be a window function. Suppose that $\Lambda=A(\sqrt{\Z})^{2d}$ is a rectangular square-root lattice such that the generating matrix $A = \mathrm{diag}(\tau_1,\dots,\tau_d,\nu_1,\dots,\nu_d)$ with $\tau,\nu \in \R^d_{>0}$ satisfies
$$
\tau_j < \sqrt{\frac{1}{2b_j e}}, \quad \nu_j < \sqrt{\frac{a_j}{2\pi^2e}}, \quad j \in \{ 1,\dots,d \}.
$$
Then the following statements are equivalent for every $f,h \in \ltd$:
\begin{enumerate}
    \item $|V_\varphi f(\lambda)| = |V_\varphi h(\lambda)|$ for every $\lambda \in \Lambda$,
    \item $f \sim h$.
\end{enumerate}
\end{theorem}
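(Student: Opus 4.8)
The plan is to reduce the phaseless sampling problem on a square-root lattice to a question about entire functions of exponential type, exploiting the fact that for window functions in $\mathcal{O}_a^b(\C^d)$ the STFT $V_\varphi f$ is (up to a Gaussian chirp factor) the boundary value of an analytic function. Concretely, I would first recall the standard identity expressing $V_\varphi f(x,\omega)$ in terms of the Bargmann-type transform: when $\varphi$ is Gaussian-like, $e^{\pi i x \cdot \omega} V_\varphi f(x, -\omega)$ extends to an entire function of $z = \omega + ix$, and more generally for $\varphi \in \mathcal{O}_a^b(\C^d)$ one obtains an entire extension with controlled Gaussian growth. The hypothesis $|V_\varphi f(\lambda)| = |V_\varphi h(\lambda)|$ on $\Lambda = A(\sqrt{\Z})^{2d}$ then becomes the statement that two entire functions $F, G$ of a prescribed order-$2$ growth agree in modulus on a set of the form $\{(\pm\tau_j\sqrt{m_j})_j, (\pm\nu_k\sqrt{n_k})_k\}$.

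The key step is a uniqueness/rigidity theorem for entire functions: if $|F| = |G|$ on such a square-root grid and $F, G$ have Gaussian growth matching the decay parameters, then $F \equiv \tau G$ for some $\tau \in \T$. I would establish this by the Hadamard factorization combined with a counting argument — the square-root spacing is exactly what makes the grid $\{\pm\sqrt{n}\}$ have the right counting density $\#\{n : \sqrt{n} \le R\} \sim R^2$ to be a uniqueness set for functions of order $2$ and finite type, so that knowing $F\overline{F} = G\overline{G}$ on the grid forces the ratio $F/G$ (a priori meromorphic) to be a constant of modulus one. The growth bounds $\tau_j < \sqrt{1/(2b_je)}$ and $\nu_j < \sqrt{a_j/(2\pi^2 e)}$ are precisely the thresholds ensuring the grid is dense enough relative to the type of $F$ and $G$; the factor $e$ and the constants $2\pi^2$, $2$ come from matching the exponential type of the Bargmann transform of an $\mathcal{O}_a^b$ function against the Jensen-formula bound on zero density. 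This would typically be done one variable at a time, freezing all but one coordinate and using a one-dimensional result, then iterating — so I would first prove the $d=1$ case carefully and then tensorize.

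The implication $(2)\Rightarrow(1)$ is immediate since $V_\varphi(\tau f) = \tau V_\varphi f$. For $(1)\Rightarrow(2)$, after the reduction above yields $F = \tau G$ as entire functions, I would invoke injectivity of the STFT (equivalently, of the Bargmann transform / the fact that $V_\varphi$ with a nonzero window is injective on $L^2$) to conclude $f = \tau h$, hence $f \sim h$. A subtle point to handle with care: the chirp factors and the change of variables relating $\Lambda$ to the standard square-root grid must be tracked so that the rectangularity and diagonality of $A$ genuinely decouple the $2d$ coordinates into $d$ ``time'' and $d$ ``frequency'' one-dimensional problems.

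The main obstacle I expect is the one-dimensional rigidity statement: proving that $|F| = |G|$ on $\{\pm\tau\sqrt{n} : n \in \N_0\}$ together with the sharp growth constraint forces $F \sim G$. The natural route is to write $|F(\pm\tau\sqrt n)|^2 = |G(\pm\tau\sqrt n)|^2$, note that $|F|^2$ and $|G|^2$ restricted to the real axis are restrictions of the entire functions $F(z)\overline{F(\bar z)}$ and $G(z)\overline{G(\bar z)}$, and then argue that an entire function of order $2$ and type below the critical threshold that vanishes on $\{\pm\tau\sqrt n\}$ must vanish identically — this is where a careful Jensen/Carleman estimate on the zero-counting function, using $\sum_n (\tau\sqrt n)^{-2} = \infty$ being too weak but the finer order-$2$ density count being exactly sharp, has to be executed. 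Controlling the type constant precisely (rather than just finiteness) is the delicate part, and I anticipate it is where the explicit bounds in the theorem are forced.
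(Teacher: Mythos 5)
Your plan has the right density heuristic but two genuine gaps. First, the opening reduction is not available at the stated generality: you want $V_\varphi f$ itself (up to a chirp) to extend to an entire function of $z=\omega+ix$, which is true for Gaussian windows via the Bargmann transform but fails for general $\varphi\in\mathcal{O}_a^b(\C^d)$ --- by the Ascensi--Bruna theorem (which the paper itself cites in its closing remarks), the Gaussian is essentially the \emph{only} window making $V_\varphi f$ analytic in this sense, while the theorem covers a class of windows that is dense in $\ltd$. The paper circumvents this by analytically extending a different object in doubled dimension: it proves (via the identity $|V_\varphi f(x,\cdot)|^2=\ift\bigl(t\mapsto\langle f_t,T_x\varphi_t\rangle\bigr)$ and a holomorphic parameter-integral lemma) that the spectrogram $|V_\varphi f|^2$ extends from $\R^{2d}$ to an entire function in $\mathcal{O}^c(\C^{2d})$ with $c=(2b,2\pi^2/a)$, for \emph{every} $f\in\ltd$ and every window in the class; no Bargmann structure is used.

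Second, your ``key rigidity step'' --- that $|F|=|G|$ on the square-root grid plus the growth bound forces $F\equiv\tau G$ --- overreaches, and Hadamard factorization plus zero counting will not deliver it. What the Jensen-formula argument actually gives (and this part of your plan, including the constants $\sqrt{1/(2b_je)}$, $\sqrt{a_j/(2\pi^2e)}$ and the freeze-one-variable-and-induct strategy, matches the paper's Proposition on uniqueness sets for $\mathcal{O}^c(\C^{2d})$) is that the entire extension of $|V_\varphi f|^2-|V_\varphi h|^2$ vanishes identically, i.e.\ the spectrograms agree on all of $\R^{2d}$. Passing from full spectrogram agreement to $f\sim h$ is a separate, nontrivial phase retrieval step that ``injectivity of the STFT'' does not supply --- injectivity handles phased samples, not moduli. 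The paper closes this gap with its Corollary: from $|V_\varphi f|^2=|V_\varphi h|^2$ everywhere it derives $\ft(f_\omega-h_\omega)\cdot\ft(\mathcal{R}\varphi_\omega)=0$, uses that $\ft(\mathcal{R}\varphi_\omega)$ is a nontrivial entire function (hence nonzero off a null set) to conclude $V_ff=V_hh$ on $\R^{2d}$, and then invokes the classical ambiguity-function lemma to get $f\sim h$. Without an argument of this kind (or, in the Gaussian-only case, the classical unimodular-ratio argument applied after upgrading to modulus agreement everywhere), your chain of implications does not close.
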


Notice that for $\gamma > 0$ and for a suitable choice of $a$ and $b$, the Gaussian $\varphi(x)=e^{-\gamma \| x \|_2^2}, \, \| x \|_2^2 = \sum_{j=1}^d x_j^2$, is an element of $\mathcal{O}^a_b(\C^d)$. In view of both theory and applications, Gaussians represent the most significant window functions as they allow for a connection to the theory of Fock spaces. It is this particular relation which leads to the most extensive results in STFT phase retrieval with Gaussian windows \cite{grohsliehr1,grohsliehr2,ALAIFARI2021401,GrohsRathmair,GrohsRathmair2,daub,multiwindow}. We therefore state two separate results on square-root sampling for Gaussian-type windows, where we also allow for multiplication by entire functions of exponential type. The first consequence of Theorem \ref{thm:main_result} reads as follows.

\begin{corollary}\label{cor:polynomial_times_gauss}
    Let $\gamma > 0$, let $p : \C^d \to \C$ be an entire function of exponential type, and let $\varphi \in \ltd$ be the window function
    $
    \varphi(x) = p(x)e^{-\gamma \| x \|_2^2}.
    $
    If $\alpha,\beta > 0$ satisfy
    $$
    \alpha < \sqrt{\frac{1}{2\gamma e}}, \quad \beta < \sqrt{\frac{\gamma}{2 \pi^2 e}},
    $$
    then the following statements are equivalent for every $f,h \in \ltd$:
    \begin{enumerate}
    \item $|V_\varphi f(\lambda)| = |V_\varphi h(\lambda)|$ for every $\lambda \in \alpha (\sqrt{\Z})^d \times \beta (\sqrt{\Z})^d$,
    \item $f \sim h$.
\end{enumerate}
\end{corollary}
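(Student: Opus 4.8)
The plan is to deduce the corollary directly from Theorem \ref{thm:main_result}: one only has to produce parameters $a,b \in \R^d_{>0}$ for which the window $\varphi = p\cdot e^{-\gamma\|\cdot\|_2^2}$ belongs to $\mathcal{O}^a_b(\C^d)$ and for which the diagonal generating matrix $A = \mathrm{diag}(\alpha,\dots,\alpha,\beta,\dots,\beta) \in \mathrm{GL}(2d,\R)$ meets the density constraints of the theorem. We may assume $p \not\equiv 0$ (otherwise $\varphi = 0$ is not a legitimate window and the claimed equivalence fails trivially), so that $\varphi$ is a nonzero entire function whose restriction to $\Rd$ lies in $\ltd$; moreover $\alpha(\sqrt{\Z})^d \times \beta(\sqrt{\Z})^d = A(\sqrt{\Z})^{2d}$ is a rectangular square-root lattice, so the set appearing in the corollary is exactly of the form treated by Theorem \ref{thm:main_result}.

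The next step is to control the growth of $\varphi$ on $\C^d$. Since $p$ is of exponential type, there is a constant $c \geq 0$ with $|p(z)| \lesssim e^{c\|z\|_2}$, and combining $\|x+iy\|_2 \leq \|x\|_2 + \|y\|_2 \leq \sum_{j=1}^d(|x_j|+|y_j|)$ with the identity $|e^{-\gamma\sum_{j} z_j^2}| = \prod_{j=1}^d e^{-\gamma x_j^2}e^{\gamma y_j^2}$ yields
$$
|\varphi(x+iy)| \;\lesssim\; \prod_{j=1}^d e^{c|x_j|}e^{-\gamma x_j^2}\,e^{c|y_j|}e^{\gamma y_j^2}.
$$
Fixing any $\gamma',\gamma''$ with $0 < \gamma' < \gamma < \gamma''$, the elementary bounds $c|t| - \gamma t^2 \leq -\gamma' t^2 + \tfrac{c^2}{4(\gamma-\gamma')}$ and $c|t| + \gamma t^2 \leq \gamma'' t^2 + \tfrac{c^2}{4(\gamma''-\gamma)}$, valid for all $t \in \R$, give $|\varphi(x+iy)| \lesssim \prod_{j=1}^d e^{-\gamma' x_j^2}e^{\gamma'' y_j^2}$; that is, $\varphi \in \mathcal{O}^a_b(\C^d)$ with growth parameter $a = (\gamma'',\dots,\gamma'')$ and decay parameter $b = (\gamma',\dots,\gamma')$.

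It remains to choose $\gamma'$ and $\gamma''$ so that the hypotheses of Theorem \ref{thm:main_result} hold for $A$, namely $\alpha < \sqrt{1/(2\gamma' e)}$ and $\beta < \sqrt{\gamma''/(2\pi^2 e)}$, equivalently $\gamma' < 1/(2e\alpha^2)$ and $\gamma'' > 2\pi^2 e\beta^2$. The corollary assumes the \emph{strict} inequalities $\alpha < \sqrt{1/(2\gamma e)}$ and $\beta < \sqrt{\gamma/(2\pi^2 e)}$, i.e.\ $\gamma < 1/(2e\alpha^2)$ and $\gamma > 2\pi^2 e\beta^2$; hence \emph{any} admissible pair, for instance $\gamma' = \gamma/2$ and $\gamma'' = 2\gamma$, automatically satisfies $\gamma' < \gamma < 1/(2e\alpha^2)$ and $\gamma'' > \gamma > 2\pi^2 e\beta^2$. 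With this choice, Theorem \ref{thm:main_result} applied to the window $\varphi$ and the rectangular square-root lattice $\Lambda = A(\sqrt{\Z})^{2d}$ gives precisely the equivalence of statements (1) and (2).

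I do not anticipate a genuine obstacle here: once Theorem \ref{thm:main_result} is in hand the argument is bookkeeping. The only point that needs a moment of care is that multiplying a Gaussian of rate $\gamma$ by an exponential-type factor forces one to pass to a strictly smaller decay rate $\gamma' < \gamma$ and a strictly larger growth rate $\gamma'' > \gamma$; however, the strictness built into the hypotheses on $\alpha$ and $\beta$ leaves exactly enough slack to absorb this loss, so no quantitative sharpness is forfeited.
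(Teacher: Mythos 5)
Your overall route is the paper's: show that $\varphi$ lies in some class $\mathcal{O}_{\gamma'}^{\gamma''}(\C^d)$ (your elementary absorption of the exponential-type factor into slightly perturbed Gaussian exponents is a correct re-derivation of Lemma \ref{lemma:expType}) and then invoke Theorem \ref{thm:main_result} for the rectangular square-root lattice $\alpha(\sqrt{\Z})^d\times\beta(\sqrt{\Z})^d$. The gap is in the final application: you attach the two parameters to the wrong sampling conditions. As the proof of Theorem \ref{thm:main_result} via Lemma \ref{lma:stft_inclusion} and Proposition \ref{lma:uniqueness_complex_variables} shows, if $|\varphi(x+iy)|\lesssim \prod_j e^{-\gamma' x_j^2}e^{\gamma'' y_j^2}$ (decay $\gamma'$, growth $\gamma''$), then $|V_\varphi f|^2$ extends to an entire function growing like $e^{2\gamma''|z_j|^2}$ in the time variables and like $e^{(2\pi^2/\gamma')|z_j'|^2}$ in the frequency variables, so the conditions one must verify are $\alpha<\sqrt{1/(2\gamma'' e)}$ and $\beta<\sqrt{\gamma'/(2\pi^2 e)}$: the time spacing is constrained by the \emph{growth} parameter and the frequency spacing by the \emph{decay} parameter. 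You require instead $\alpha<\sqrt{1/(2\gamma' e)}$ and $\beta<\sqrt{\gamma''/(2\pi^2 e)}$, so in your bookkeeping a weaker decay and a larger growth \emph{relax} the sampling conditions; that direction is backwards, since enlarging the class (bigger $\gamma''$, smaller $\gamma'$) only weakens the information about $\varphi$ and cannot loosen the sampling requirement. (The typography $\mathcal{O}^a_b$ in the statement of Theorem \ref{thm:main_result} versus the defined $\mathcal{O}_a^b$ invites this confusion, but the theorem's proof pins down the correct pairing.)

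Consequently your concrete choice $\gamma'=\gamma/2$, $\gamma''=2\gamma$ does not work: with the correct pairing it would demand $\alpha<\sqrt{1/(4\gamma e)}$ and $\beta<\sqrt{\gamma/(4\pi^2 e)}$, which do not follow from the corollary's hypotheses, and the claim that ``any admissible pair'' $(\gamma',\gamma'')$ suffices is false --- the conditions become strictly harder as $\gamma'$ decreases and $\gamma''$ increases. The remedy is exactly the slack you mention in your closing sentence but never use: take $\gamma'=\gamma-\varepsilon$ and $\gamma''=\gamma+\varepsilon$ with $\varepsilon>0$ so small that $\alpha<\sqrt{1/(2(\gamma+\varepsilon)e)}$ and $\beta<\sqrt{(\gamma-\varepsilon)/(2\pi^2 e)}$, which is possible precisely because the inequalities on $\alpha$ and $\beta$ are strict; this is how the paper concludes (via Lemma \ref{lemma:expType} with small $\varepsilon$). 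With that replacement, the rest of your argument --- the reduction of the degenerate case $p\equiv 0$, the identification of the sampling set as a rectangular square-root lattice, and the Gaussian-absorption estimates --- is fine.
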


Every polynomial is an entire function of exponential type.  Hermite functions are products of Gaussians with polynomials. Recall that the Hermite functions on the real line are given by
$$
h_n(t) = \frac{2^{1/4}}{\sqrt{n!}} \left( -\frac{1}{\sqrt{2\pi}} \right)^n e^{\pi t^2} \left( \frac{d}{dt} \right)^n e^{-2\pi t^2}, \quad n \in \N_0.
$$
The system of Hermite basis functions $\{ \mathfrak{h}_k : k \in \N_0^d \} \subseteq \ltd$ is defined via tensorisation,
$$
\mathfrak{h}_k(t_1, \dots, t_d) \coloneqq \prod_{j=1}^d h_{k_j}(t_j).
$$
The prototypical example of a Hermite function is the standard Gaussian $\mathfrak{h}_0$ which is given by $\mathfrak{h}_0(t) = 2^{d/4} e^{-\pi \| t \|_2^2}$. Any other Hermite function is the product of $\mathfrak{h}_0$ with a polynomial. The choice of Hermite functions as window functions implies the following concise statement.

\begin{corollary}\label{cor:hermite}
    Let $\mathfrak{h} \in \ltd$ be an arbitrary Hermite function. If $$0 < \alpha < \sqrt{\frac{1}{2\pi e}},$$ then the following statements are equivalent for every $f,h \in \ltd$:
    \begin{enumerate}
    \item $|V_\mathfrak{h} f(\lambda)| = |V_\mathfrak{h} h(\lambda)|$ for every $\lambda \in \alpha (\sqrt{\Z})^{2d}$,
    \item $f \sim h$.
\end{enumerate}
\end{corollary}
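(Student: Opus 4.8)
The plan is to obtain Corollary~\ref{cor:hermite} as the explicit instance $\gamma=\pi$, $\alpha=\beta$ of Corollary~\ref{cor:polynomial_times_gauss}. The first step is the elementary structural observation that every Hermite function is a polynomial multiple of a fixed Gaussian. On the real line, an easy induction gives $\left(\tfrac{d}{dt}\right)^n e^{-2\pi t^2} = r_n(t)\,e^{-2\pi t^2}$ for a polynomial $r_n$ of degree $n$, so that the Rodrigues-type formula for $h_n$ rewrites as
\begin{equation*}
h_n(t) = \frac{2^{1/4}}{\sqrt{n!}}\left(-\frac{1}{\sqrt{2\pi}}\right)^n r_n(t)\, e^{-\pi t^2} =: q_n(t)\, e^{-\pi t^2},
\end{equation*}
with $q_n$ a nonzero polynomial. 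Tensorising, an arbitrary $d$-dimensional Hermite function $\mathfrak{h}=\mathfrak{h}_k$, $k\in\N_0^d$, satisfies $\mathfrak{h}(t) = p(t)\,e^{-\pi\|t\|_2^2}$ where $p(t)=\prod_{j=1}^d q_{k_j}(t_j)$ is a nonzero polynomial on $\R^d$. Since every polynomial extends to an entire function on $\C^d$ of exponential type, $p$ is an admissible factor in Corollary~\ref{cor:polynomial_times_gauss}.

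Next I would invoke Corollary~\ref{cor:polynomial_times_gauss} with this $p$ and with $\gamma=\pi$. Its two constraints become
\begin{equation*}
\alpha < \sqrt{\frac{1}{2\pi e}}, \qquad \beta < \sqrt{\frac{\pi}{2\pi^2 e}} = \sqrt{\frac{1}{2\pi e}},
\end{equation*}
so the single hypothesis $0<\alpha<\sqrt{1/(2\pi e)}$ of Corollary~\ref{cor:hermite} allows the choice $\beta=\alpha$, for which the sampling set $\alpha(\sqrt{\Z})^d\times\beta(\sqrt{\Z})^d$ coincides with $\alpha(\sqrt{\Z})^{2d}$. The equivalence of (1) and (2) is then precisely the conclusion of Corollary~\ref{cor:polynomial_times_gauss}.

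The only point requiring care is the normalization constant carried by $\mathfrak{h}$: one may simply absorb it into the polynomial $p$, or else note that $V_{cg}f = \overline{c}\,V_g f$ for every $c\in\C$, so that rescaling the window by a nonzero scalar leaves the phaseless identity $|V_{\mathfrak{h}}f(\lambda)| = |V_{\mathfrak{h}}h(\lambda)|$ intact. Beyond this bookkeeping there is no genuine obstacle: all of the analytic content resides in Theorem~\ref{thm:main_result} and Corollary~\ref{cor:polynomial_times_gauss}, and the present statement is just a transparent specialization of the latter.
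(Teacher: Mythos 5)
Your proposal is correct and follows exactly the paper's own route: Corollary \ref{cor:hermite} is deduced from Corollary \ref{cor:polynomial_times_gauss} with $\gamma=\pi$, using that a Hermite function is a (nonzero) polynomial times the Gaussian $e^{-\pi\|t\|_2^2}$ and that polynomials are entire of exponential type, so both constraints reduce to $\alpha,\beta<\sqrt{1/(2\pi e)}$ and one may take $\beta=\alpha$. The extra remarks on the Rodrigues formula and the normalization constant are fine but not needed beyond what the paper's one-line derivation already contains.
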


Finally, we note that more general versions of the previous theorems are derived where the generating matrix of the square-root lattice does not need to be diagonal. We refer to Section \ref{sec4} regarding this matter.

\begin{figure}
 \centering
 \hspace*{-1cm}
   \includegraphics[width=14cm,trim={0 1.5cm 0 0},clip]{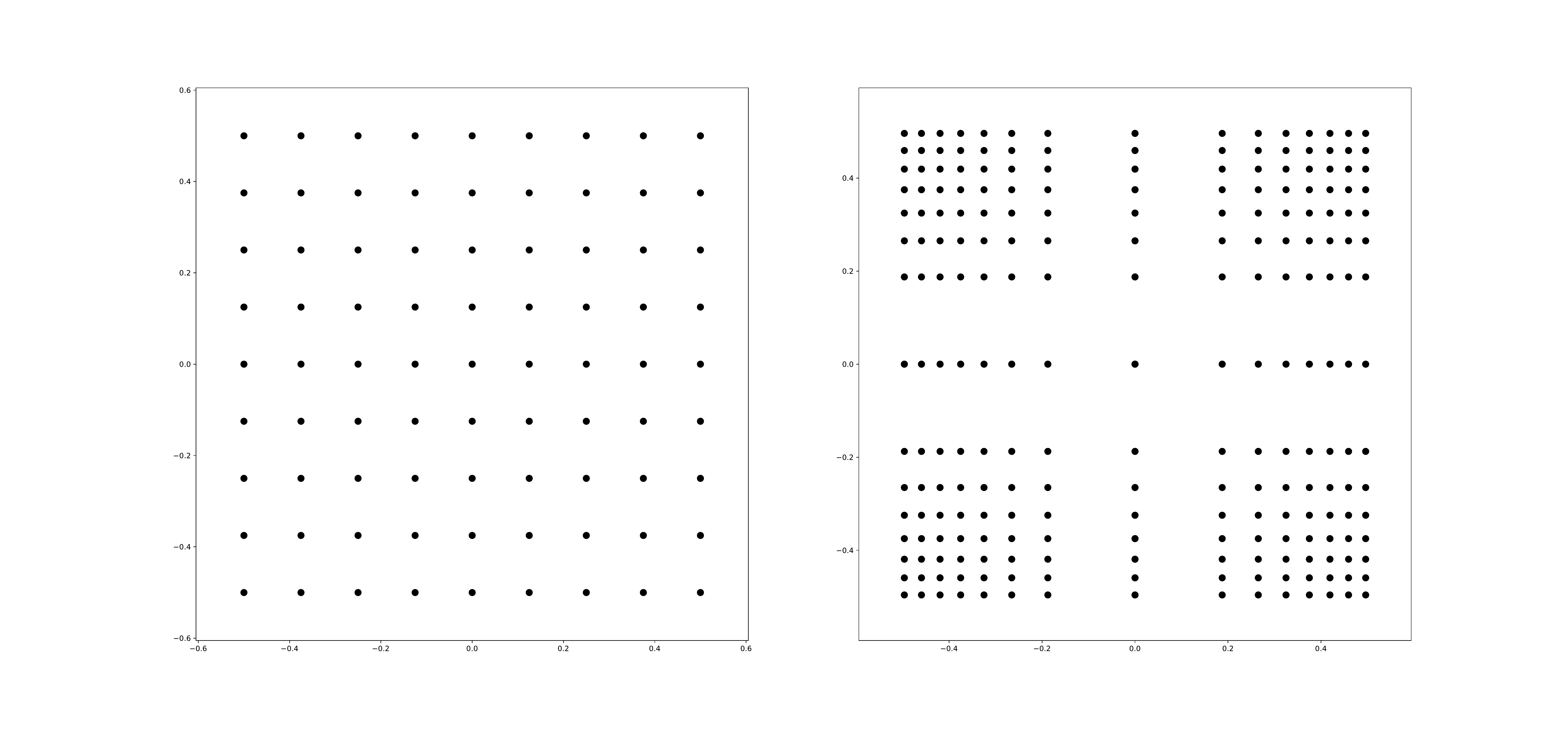}
 \caption{The left figure depicts an ordinary lattice of the form $A\Z^2$ while the right figure visualizes a square-root lattice $B(\sqrt{\Z})^2$ $(A,B \in \mathrm{GL}(2,\R))$.}
 \label{fig:comparison}
 \end{figure}

\subsection{Existing work on square-root sampling}

The purpose of this section is to mention several existing results on square-root sampling. The first one is a sampling result for the short-time Fourier transform with Gaussian window function.

\textbf{Ascensi, Lyubarskii and Seip \cite{ASCENSI2009277}.} Consider the set $\Lambda \subseteq \R^2 \simeq \C$ defined by
\begin{equation}\label{eq:seip}
    \Lambda = \sqrt{2} \left ( \{ 0 \} \times \sqrt{\Z} \right ) \cup \sqrt{2} \left ( \sqrt{\Z} \times \{ 0 \} \right ) \cup \{ (1,0),(0,1)\}.
\end{equation}
According to \cite[Theorem 1]{ASCENSI2009277}, the set $\Lambda$ is a uniqueness set for the Bargmann-Fock space $\mathcal{F}^2(\C)$ which is defined as the collection of all entire functions $F \in \mathcal{O}(\C)$ so that
$$
\int_\C |F(z)|^2 e^{-\pi |z|^2} \, dA(z) < \infty,
$$
where $dA(z)$ denotes the Euclidean area measure on $\C$. The Bargmann-transform constitutes a unitary isomorphism between $\mathcal{F}^2(\C)$ and $\lt$. In addition, it yields an intimate relation between the short-time Fourier transform $V_\varphi$ with window function $\varphi(t)=2^{1/4}e^{-\pi t^2}$ and $\mathcal{F}^2(\C)$ \cite[Section 3.4]{Groechenig}. In view of this one-to-one correspondence, the theorem of Ascensi, Lyubarskii and Seip implies that if $\Lambda$ is defined as in \eqref{eq:seip}, then the following statements are equivalent for every $f,h \in \ltd$:
    \begin{enumerate}
    \item $V_\varphi f(\lambda) = V_\varphi h(\lambda)$ for every $\lambda \in \Lambda$,
    \item $f = h$.
    \end{enumerate}
This shows that in the presence of phase information, square-root samples which are distributed over the time- and frequency axis give rise to unique determination of every $f \in \lt$.

\textbf{Viazovska and Radchenko \cite{viazovska1}.}
Let $\mathcal{S}_e(\R)$ denote the space of even Schwartz functions $f : \R \to \R$. According to \cite[Corollary 1]{viazovska1}, the following two statements are equivalent for every $f,h \in \mathcal{S}_e(\R)$:
\begin{enumerate}
    \item $f(\lambda)=h(\lambda)$ and $\hat{f}(\lambda) = \hat{h}(\lambda)$ for every $\lambda \in \sqrt{\Z_{\geq 0}}$,
    \item $f=h$.
\end{enumerate}
Thus, every even Schwartz function is determined by square-root samples of itself and its Fourier transform. The proof of this statement is based on the theory of modular forms.

\section{Preliminaries}

In this section, we collect several preliminary results which are needed in the remainder of the article, in particular, in the proofs of the assertions presented in Section \ref{sec:main_results}. The focus of study is the function space $\mathcal{O}_{a}^b(\C^d)$ and uniqueness sets for spaces of analytic functions.

\subsection{The function space $\mathcal{O}_{a}^b(\C^d)$}\label{sec:function_space_properties}

Recall that if $a,b \in \R^d_{>0}$, then $\mathcal{O}_a^b(\Cd)$ denotes the collection of all entire functions $F \in \mathcal{O}(\Cd)$ which satisfy the estimate
$$
|F(x+iy)| \lesssim \prod_{j=1}^d e^{-a_j x_j^2}e^{b_jy_j^2}, \quad x,y \in \Rd.
$$
Further, we define
$$
\mathcal{O}^b(\C^d) \coloneqq \left \{ F \in \mathcal{O}(\C^d) : |F(z_1,\dots, z_d)| \lesssim \prod_{j=1}^d e^{b_j|z_j|^2} \right \}.
$$
It is evident that for every $a \in \R^d_{>0}$ we have $\mathcal{O}_{a}^b(\C^d) \subseteq \mathcal{O}^b(\C^d)$. The parameters $a$ and $b$ govern the growth and the decay of $F \in  \mathcal{O}_{a}^b(\C^d)$: the constant $b$ upper bounds the growth of $F$ on $\Cd$ whilst $a$ simultaneously lower bounds the decay rate of $F$ on $\Rd$. It follows, that every $F \in \mathcal{O}_{a}^b(\C^d)$ is an entire function of order at most two in each complex variable. Recall that an entire function $F \in \mathcal{O}(\C)$ is said to be of finite order if there exists a positive number $c>0$ such that
$
|F(z)| \lesssim e^{|z|^c}.
$
The smallest non-negative number $\rho \geq 0$ such that
$$
|F(z)| \lesssim e^{|z|^{\rho + \varepsilon}}
$$
for every $\varepsilon>0$ is called the order of $F$. The order of a non-constant entire function $F$ is given by the formula
$$
\rho = \limsup_{r \to \infty} \frac{\log \log M(r)}{\log r},
$$
where $M(r) = \max_{z \in \partial B_r(0)} |F(z)|$ denotes the maximum-modulus function of $F$, and $\partial B_r(0) \coloneqq \{ z \in \C : |z|=r \}$.
An entire function $G : \Cd \to \C$ is said to be of exponential type if it is of order one in each variable, that is, there exists a positive real number $\sigma > 0$ such that
$$
|G(z)| \lesssim e^{\sigma \| z \|_1}
$$
where $\| z \|_1 = \sum_{j=1}^d |z_j|, \, z = (z_1,\dots,z_d)^T \in \Cd$. The following short lemma makes an assertion about the product of an entire function of exponential type with a function in $\mathcal{O}_a^b(\Cd)$. It will be of frequent use throughout the remainder of the article.

\begin{lemma}\label{lemma:expType}
    Let $a,b \in \R^d_{>0}$, let $F \in \mathcal{O}_a^b(\Cd)$, and let $G$ be an entire function of exponential type. Then for every $\varepsilon = (\varepsilon_1, \dots, \varepsilon_d)^T \in \R_{>0}^d$ with $\varepsilon_j \in (0,a_j)$ for all $j \in \{ 1,\dots, d \}$, it holds that $GF \in \mathcal{O}_{a-\varepsilon}^{b+\varepsilon}(\Cd)$.
\end{lemma}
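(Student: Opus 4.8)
The plan is to combine the pointwise bounds defining the two function classes with an elementary one-variable estimate. Since $G$ and $F$ are both entire, the product $GF$ is entire, so it suffices to establish the growth/decay estimate required for membership in $\mathcal{O}_{a-\varepsilon}^{b+\varepsilon}(\Cd)$.

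First I would record the two defining inequalities: there is a constant $C_F>0$ with $|F(x+iy)| \le C_F \prod_{j=1}^d e^{-a_j x_j^2} e^{b_j y_j^2}$ for all $x,y \in \Rd$, and, since $G$ is of exponential type, there are constants $C_G,\sigma>0$ with $|G(z)| \le C_G e^{\sigma \| z \|_1} = C_G \prod_{j=1}^d e^{\sigma|z_j|}$. Writing $z_j = x_j + i y_j$ and using $|z_j| \le |x_j| + |y_j|$, the product is bounded by
$$
|G(z)F(z)| \le C_F C_G \prod_{j=1}^d e^{\sigma|x_j|} \, e^{\sigma|y_j|} \, e^{-a_j x_j^2} \, e^{b_j y_j^2}.
$$
The key step is the elementary inequality $\sigma t - \delta t^2 \le \sigma^2/(4\delta)$, valid for all $t \ge 0$ and all $\delta > 0$ (maximize the left-hand side in $t$). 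Applying it with $t = |x_j|$, $\delta = \varepsilon_j$ yields $e^{\sigma|x_j|} e^{-a_j x_j^2} \le e^{\sigma^2/(4\varepsilon_j)} e^{-(a_j-\varepsilon_j)x_j^2}$, and with $t = |y_j|$, $\delta = \varepsilon_j$ it yields $e^{\sigma|y_j|} e^{b_j y_j^2} \le e^{\sigma^2/(4\varepsilon_j)} e^{(b_j+\varepsilon_j) y_j^2}$. Here the hypothesis $\varepsilon_j \in (0,a_j)$ enters in two ways: positivity of $\varepsilon_j$ makes both estimates valid and keeps the constants finite, while $\varepsilon_j < a_j$ guarantees $a_j - \varepsilon_j > 0$, so that $a-\varepsilon \in \R^d_{>0}$ and the target class $\mathcal{O}_{a-\varepsilon}^{b+\varepsilon}(\Cd)$ is well-defined. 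Multiplying these bounds over $j$ and folding all resulting constants into one, I obtain $|G(z)F(z)| \lesssim \prod_{j=1}^d e^{-(a_j-\varepsilon_j)x_j^2} e^{(b_j+\varepsilon_j) y_j^2}$, which is exactly the membership condition for $\mathcal{O}_{a-\varepsilon}^{b+\varepsilon}(\Cd)$.

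There is no genuine obstacle here; the only point requiring minimal care is that the exponential-type bound must be split into a factor depending on $x_j$ and one depending on $y_j$ \emph{before} the quadratic terms can absorb the linear ones, and that one tracks the direction of each inequality correctly — the linear growth in $|x_j|$ eats into the Gaussian decay (worsening $a_j$ to $a_j-\varepsilon_j$), whereas the linear growth in $|y_j|$ is absorbed into the Gaussian growth (worsening $b_j$ to $b_j+\varepsilon_j$).
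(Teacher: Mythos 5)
Your proof is correct and follows essentially the same route as the paper: split the exponential-type bound $e^{\sigma\|z\|_1}$ into factors $e^{\sigma|x_j|}e^{\sigma|y_j|}$ and absorb each linear term into a quadratic via $e^{\sigma|t|}\le C e^{\varepsilon_j t^2}$ (you just make the constant $e^{\sigma^2/(4\varepsilon_j)}$ explicit). Nothing further is needed.
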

\begin{proof}
    Since $G$ is of exponential type, there exists a positive real number $\sigma >0$ such that
    $$
    |G(z)| \lesssim  e^{\sigma \| z \|_1} \leq \prod_{j=1}^d e^{\sigma |x_j|} e^{\sigma |y_j|}.
    $$
    Notice that for every $\delta>0$ there exists a constant $C=C(\delta)>0$ such that for every $t \in \R$ it holds that $e^{\sigma |t|} \leq Ce^{\delta t^2}$. Thus,
    $$
    |G(x+iy)F(x+iy)| \lesssim \prod_{j=1}^d e^{-(a_j-\varepsilon_j)x_j^2}e^{(b_j+\varepsilon_j)y_j^2},
    $$
    and therefore $GF \in \mathcal{O}_{a-\varepsilon}^{b+\varepsilon}(\Cd).$
\end{proof}

We proceed with the discussion of several important properties of the function class $\mathcal{O}_{a}^b(\C^d)$.

\begin{proposition}\label{prop:O_properies}
Let $a,b \in \R^d_{>0}$. Then the following properties hold:
\begin{enumerate}
    \item The set $\mathcal{O}_a^b(\Cd)$ is a linear subspace of $\mathcal{O}(\Cd) \cap L^2(\Rd)$.
    \item For every $\mathfrak{a},\mathfrak{b} \in \R_{>0}^d$ such that $\mathfrak{a}_j \in (0,a_j)$ and $\mathfrak{b}_j > b_j$ for all $j \in \{ 1, \dots, d \}$, it holds that
    $$
    \mathcal{O}_a^b(\Cd) \subseteq \mathcal{O}_{\mathfrak{a}}^{\mathfrak{b}}(\Cd).
    $$
    \item If $a<b$, then $\mathcal{O}_a^b(\Cd)$ is infinite-dimensional and dense in $\ltd$.
    \item If $a=b$, then $\mathcal{O}_a^b(\Cd) = \C \varphi$ where $\varphi \in \mathcal{O}(\Cd)$ is the Gaussian $$\varphi(z_1,\dots,z_d) = \prod_{j=1}^d e^{-a_j z_j^2}.$$
    \item If $a_j>b_j$ for some $j \in \{ 1,\dots, d \}$, then $\mathcal{O}_a^b(\Cd) = \{ 0 \}$.
\end{enumerate}
\end{proposition}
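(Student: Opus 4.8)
The proposition collects five claims of rather uneven difficulty, so the plan is to dispose of (1), (2), (4) and (5) by short direct arguments and to concentrate the real effort on the density statement (3). Claims (1) and (2) are pure bookkeeping with the defining bound. For (1): if $|F(x+iy)|\le c_F\prod_{j}e^{-a_jx_j^2}e^{b_jy_j^2}$, the bound survives addition (add the constants) and scalar multiplication, so $\mathcal O_a^b(\Cd)$ is a linear space, and restricting to $y=0$ gives $|F(x)|\lesssim\prod_je^{-a_jx_j^2}$, a Gaussian, whence $F|_{\Rd}\in\ltd$. For (2): when $\mathfrak a_j<a_j$ and $\mathfrak b_j>b_j$ for all $j$, the factor $\prod_je^{(\mathfrak a_j-a_j)x_j^2}e^{(b_j-\mathfrak b_j)y_j^2}$ has only non-positive exponents, hence is $\le1$; multiplying the bound for $F\in\mathcal O_a^b(\Cd)$ by it yields exactly the bound defining $\mathcal O_{\mathfrak a}^{\mathfrak b}(\Cd)$.

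Claims (4) and (5) I would both derive from Liouville's theorem after dividing by a zero-free Gaussian. For (4): the elementary computation $|\varphi(x+iy)|=\prod_je^{-a_jx_j^2}e^{a_jy_j^2}$ shows $\C\varphi\subseteq\mathcal O_a^a(\Cd)$; conversely, if $F\in\mathcal O_a^a(\Cd)$, then $F/\varphi$ is entire (as $\varphi$ is nowhere zero) and satisfies $|F(z)/\varphi(z)|\lesssim1$, so $F/\varphi$ is a bounded entire function on $\Cd$, hence constant. For (5), assume $a_j>b_j$ for some $j$ and, for contradiction, take a nonzero $F\in\mathcal O_a^b(\Cd)$ with $F(z^0)\neq0$; freezing all variables except the $j$-th at their $z^0$-values produces a nonzero one-variable entire function $g$ with $|g(u+iv)|\lesssim e^{-a_ju^2}e^{b_jv^2}$. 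Choosing $c$ with $b_j<c<a_j$, the function $g(w)e^{cw^2}$ is bounded, since $|g(u+iv)e^{c(u+iv)^2}|\lesssim e^{(c-a_j)u^2}e^{(b_j-c)v^2}\le1$, hence equals a constant $C$; then $g(w)=Ce^{-cw^2}$, and the decay bound $|g(u)|\lesssim e^{-a_ju^2}$ on the real line forces $|C|e^{(a_j-c)u^2}\lesssim1$ with $a_j-c>0$, so $C=0$, contradicting $g\not\equiv0$.

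The main obstacle is (3). Here the plan is to exhibit, for any $a<b$, a concrete family sitting inside $\mathcal O_a^b(\Cd)$ whose linear span is dense in $\ltd$. Fix $c\in\R^d_{>0}$ with $a_j<c_j<b_j$ and put $\psi(z)=\prod_je^{-c_jz_j^2}$, so that $|\psi(x+iy)|=\prod_je^{-c_jx_j^2}e^{c_jy_j^2}$; for a multi-index $n\in\N_0^d$ the monomial $z\mapsto z^n$ is of exponential type, so Lemma \ref{lemma:expType} gives $z^n\psi\in\mathcal O_{c-\varepsilon}^{c+\varepsilon}(\Cd)$, and taking $\varepsilon$ small enough that $a_j<c_j-\varepsilon_j$ and $c_j+\varepsilon_j<b_j$, part (2) places $z^n\psi$ in $\mathcal O_a^b(\Cd)$. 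It then remains to see that $\lspan\{t^n\psi(t):n\in\N_0^d\}$ is dense in $\ltd$; by tensorisation this reduces to $d=1$, where it is the classical completeness of the (dilated) Hermite functions. To keep that step self-contained, one checks that $\langle f,t^ne^{-ct^2}\rangle=0$ for all $n$ forces the Fourier transform of $f(t)e^{-ct^2}$ — which extends to an entire function because $f(t)e^{-ct^2}e^{\delta|t|}\in\lo$ for every $\delta>0$ — to have all derivatives vanishing at the origin, hence to vanish identically, so $f=0$ almost everywhere. Infinite-dimensionality of $\mathcal O_a^b(\Cd)$ is then immediate, since a dense finite-dimensional subspace of $\ltd$ would be all of $\ltd$. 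Apart from this density input, which is standard, the whole proof is elementary estimation combined with Liouville's theorem; the one point deserving a little care is the justification that the above Fourier transform really is analytic, i.e.\ controlling $\|f(t)e^{-ct^2}e^{\delta|t|}\|_{\lo}$.
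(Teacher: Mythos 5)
Your proposal is correct and follows essentially the same route as the paper: (1)--(2) directly from the defining bound, (3) by placing polynomial-times-Gaussian functions with an intermediate decay parameter into $\mathcal{O}_a^b(\Cd)$ via Lemma \ref{lemma:expType} and invoking completeness of (scaled) Hermite-type systems, and (4)--(5) by a Liouville argument after multiplying/dividing by a suitable zero-free Gaussian. The only differences are cosmetic: you spell out the classical moment/analytic-Fourier argument for Hermite density that the paper cites as known, and in (5) you kill the Liouville constant via real-axis decay rather than decay in all directions.
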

\begin{proof}
    The property of $\mathcal{O}_a^b(\Cd)$ being a linear space of entire functions follows directly from its definition. Since every $F \in \mathcal{O}_a^b(\Cd)$ has Gaussian decay on $\Rd$, we further deduce the inclusion $\mathcal{O}_a^b(\Cd) \subseteq L^2(\Rd)$. The inclusion $\mathcal{O}_a^b(\Cd) \subseteq \mathcal{O}_{\mathfrak{a}}^{\mathfrak{b}}(\Cd)$ with $\mathfrak{a},\mathfrak{b}$ defined as above follows at once from the definition of $\mathcal{O}_a^b(\Cd)$. 
    This shows the first two claims of the Proposition. 
    To prove the third statement, suppose that $a<b$ and define $c \coloneqq \frac{1}{2}(a+b)$. Then for every $j \in \{ 1,\dots, d \}$ and every $z \in \Cd$ we have
    \begin{equation}\label{eq:cj}
        c_j > a_j, \quad c_j < b_j, \quad \left |e^{-c_j z_j^2} \right | = e^{-c_j x_j^2} e^{c_j y_j^2}.
    \end{equation}
    This shows that the Gaussian $G \in \mathcal{O}(\Cd)$ defined via
    $$
    G(z) = \prod_{j=1}^d e^{-c_j z_j^2}
    $$
    is an element of $\mathcal{O}_a^b(\Cd)$. Now let $p \in \mathcal{O}(\Cd)$ be a polynomial. By virtue of Lemma \ref{lemma:expType} and the fact that $p$ is an entire function of exponential type, it holds that $pG \in \mathcal{O}_{c-\varepsilon}^{c+\varepsilon}(\Cd)$ for every $\varepsilon = (\varepsilon_1, \dots, \varepsilon_d)^T \in \R_{>0}^d$ with $\varepsilon_j \in (0,a_j)$ for all $j \in \{ 1,\dots, d \}$. In view of Equation \eqref{eq:cj}, it follows that a sufficiently small choice of $\varepsilon_j$ implies that we have both $c_j-\varepsilon_j > a_j$ and $c_j + \varepsilon_j < b_j$ for every $j \in \{1,\dots,d \}$. Thus, the previous inclusion shows that $pG \in \mathcal{O}_a^b(\Cd)$. Since $p$ was arbitrary, every function which is a product of a polynomial with the Gaussian $G$ is an element of $\mathcal{O}_a^b(\Cd)$. In particular, $\mathcal{O}_a^b(\Cd)$ contains a system of scaled Hermite functions, the span of which is known to be dense in $\ltd$. The statement follows from that.
    For the fourth assertion, we observe that if $F \in \mathcal{O}_a^b(\Cd)$, then for every $z \in \Cd$ it holds that
    \begin{equation}\label{eq:liouville}
        \begin{split}
            \left | F(z) \prod_{j=1}^d e^{\frac{a_j+b_j}{2}z_j^2} \right | & = |F(z)| \prod_{j=1}^d e^{\frac{a_j+b_j}{2}x_j^2} \prod_{j=1}^d e^{-\frac{a_j+b_j}{2}y_j^2} \\
        & \lesssim \prod_{j=1}^d e^{\frac{b_j-a_j}{2}|z_j|^2}.
        \end{split}
    \end{equation}
    If $a=b$, then the right-hand side of Equation \eqref{eq:liouville} is bounded, whence Liouville's theorem applies and shows the existence of a constant $\nu \in \C$ such that $F=\nu \varphi$, where $\varphi$ is defined as in the statement of the Proposition. Hence, $\mathcal{O}_a^b(\Cd) \subseteq \C \varphi$. The reverse inclusion is trivial. In order to prove the last claim, we write for $j \in \{ 1, \dots, d \}$ and for $z \in \Cd$
    $$
    z' \coloneqq (z_1, \dots, z_{j-1},z_{j+1}, \dots, z_d)^T \in \C^{d-1}. 
    $$
    If $d=1$, we skip the definition of $z'$. We then have
    $$
    \left |F(z_1,\dots, z_d) e^{\frac{a_j+b_j}{2}z_j^2} \right | \lesssim e^{-\frac{b_j-a_j}{2}|z_j|^2}H(z'),
    $$
    where $H$ is a function of $z' \in \C^{d-1}$. Suppose that $a_j>b_j$ holds for some $j \in \{ 1,\dots, d \}$. Then, it follows from Liouville's theorem that the map $z_j \mapsto F(z_1,\dots, z_d)$ vanishes identically. Since $z'$ was arbitrary, we have $F=0$.    
\end{proof}

\subsection{Uniqueness sets for $\mathcal{O}^b(\C^d)$}

Let $\mathcal{S} \subseteq \mathcal{O}(\Cd)$ be a linear space. A set $\Lambda \subseteq \Cd$ is said to be a uniqueness set for $\mathcal{S}$ if for every $F \in \mathcal{S}$ it holds that
$$
(F(\lambda) = 0 \ \ \forall \lambda \in \Lambda) \implies F=0.
$$
Clearly, every open set in $\Lambda \subseteq \Cd$ is a uniqueness set for $\mathcal{O}(\Cd)$. If $\Lambda$ is contained in $\Rd$, then the following can be said \cite[Lemma 3.4]{grohsLiehr4}.

\begin{lemma}\label{lma:elementary_uniqueness}
Let $\Lambda \subseteq \Rd$ be a Lebesgue-measurable set such that $\mathcal{L}^d(\Lambda) > 0$, where $\mathcal{L}^d$ denotes the $d$-dimensional Lebesgue measure. Then $\Lambda$ is a uniqueness set for $\mathcal{O}(\Cd)$.
\end{lemma}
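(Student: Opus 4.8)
The plan is to argue by induction on the dimension $d$, combining the classical identity theorem for entire functions of one complex variable with a Fubini slicing argument.

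For the base case $d=1$, I would observe that any Lebesgue-measurable set $\Lambda \subseteq \R$ with $\mathcal{L}^1(\Lambda) > 0$ is uncountable, and hence possesses an accumulation point in $\R$ (a set with no accumulation point is discrete, therefore countable, therefore null). Thus if $F \in \mathcal{O}(\C)$ vanishes on $\Lambda$, it vanishes on a set with an accumulation point, and the identity theorem forces $F \equiv 0$.

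For the inductive step, assume the assertion in dimension $d-1$ and let $F \in \mathcal{O}(\Cd)$ vanish on $\Lambda \subseteq \Rd$ with $\mathcal{L}^d(\Lambda)>0$. Writing points of $\Rd$ as $(x',x_d)$ with $x' \in \R^{d-1}$, Tonelli's theorem applied to $\mathbf{1}_\Lambda$ gives that for a.e.\ $x_d \in \R$ the slice $\Lambda_{x_d} \coloneqq \{ x' \in \R^{d-1} : (x',x_d) \in \Lambda \}$ is $\mathcal{L}^{d-1}$-measurable, and that $\int_\R \mathcal{L}^{d-1}(\Lambda_{x_d})\,dx_d = \mathcal{L}^d(\Lambda) > 0$. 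Consequently the set
$$
E \coloneqq \left \{ x_d \in \R : \mathcal{L}^{d-1}(\Lambda_{x_d}) > 0 \right \}
$$
satisfies $\mathcal{L}^1(E) > 0$. For each fixed $x_d \in E$, the function $z' \mapsto F(z',x_d)$ is entire on $\C^{d-1}$ and vanishes on the positive-measure set $\Lambda_{x_d}$, so the induction hypothesis yields $F(z',x_d) = 0$ for all $z' \in \C^{d-1}$. Finally, fixing an arbitrary $z' \in \C^{d-1}$, the entire function $z_d \mapsto F(z',z_d)$ vanishes on $E$, a subset of $\R$ of positive measure and hence with an accumulation point, so it vanishes identically by the one-variable case. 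Since $z'$ was arbitrary, $F \equiv 0$.

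I do not expect a genuine obstacle here; the only step needing care is the measure-theoretic bookkeeping in the slicing argument — measurability of the slices $\Lambda_{x_d}$ and of the map $x_d \mapsto \mathcal{L}^{d-1}(\Lambda_{x_d})$ — which is exactly what Tonelli's theorem provides. As an alternative route, one could avoid the induction by noting that $f \coloneqq F|_{\Rd}$ is real-analytic: either $f \equiv 0$, in which case $F \equiv 0$ by analytic continuation, or the zero set of $f$ is $\mathcal{L}^d$-null, contradicting $\mathcal{L}^d(\Lambda) > 0$. This version trades the induction for the standard fact that the zero set of a nontrivial real-analytic function has measure zero.
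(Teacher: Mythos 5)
Your proof is correct. Note that the paper does not actually prove this lemma: it is quoted verbatim from an earlier paper of the authors (cited as [Lemma 3.4, grohsLiehr4]), so there is no internal argument to compare against, and a self-contained justification like yours is exactly what is needed. Both of your routes are sound. In the inductive argument, the base case correctly uses that a set of positive $\mathcal{L}^1$-measure is uncountable and therefore has an accumulation point in $\R$, and the Tonelli slicing correctly produces a positive-measure set $E$ of heights $x_d$ whose slices $\Lambda_{x_d}$ have positive $\mathcal{L}^{d-1}$-measure; the only bookkeeping, which you already flag, is that $E$ should be taken inside the full-measure set of $x_d$ for which the slice is measurable (Lebesgue completion). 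In the alternative route, one should also make explicit the final step: if $F|_{\R^d}\equiv 0$, then all partial derivatives of $F$ at a real point can be computed along real directions, so the Taylor series of $F$ at that point vanishes and $F\equiv 0$ on the connected set $\C^d$; with that remark, the reduction to the fact that a nontrivial real-analytic function has an $\mathcal{L}^d$-null zero set is complete. Either version is a perfectly adequate substitute for the external citation.
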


Our objective now is to establish uniqueness sets for the class $\mathcal{O}^b(\C^d), \, b \in \R^d_{>0}$, which are discrete subsets of $\Rd$. To achieve this, we introduce the zero-counting function. Given $0 < r < R$ and an analytic function $F$ defined on the open unit ball $B_R(0) = { z \in \C : |z| < R }$, we denote by $n(r)$ the number of zeros $F$ within the region where $|z| \leq r$ (counting multiplicities). This defines a map $n : [0,R) \to \N_0$. Jensen's formula relates the distribution of zeros of an analytic function to its growth \cite[p. 50]{Young}.

\begin{theorem}[Jensen]
Let $0<r<R$ and suppose that $F \in \mathcal{O}(B_R(0))$ satisfies $F(0) \neq 0$. If $z_1,\dots, z_n$ are the zeros of $F$ for which $|z_j| \leq r$ (counting multiplicities), then
\begin{equation*}
    \begin{split}
        \frac{1}{2\pi} \int_0^{2\pi} \log |F(re^{i\theta})| \, d\theta & = \log |F(0)| + \sum_{j=1}^n \log \left ( \frac{r}{|z_j|} \right ) \\
        &= \log |F(0)| + \int_0^r \frac{n(t)}{t} \, dt.
    \end{split}
\end{equation*}
\end{theorem}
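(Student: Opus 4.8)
The plan is to reduce the statement, by dividing out a finite Blaschke product adapted to the disc of radius $r$, to the case of a zero-free holomorphic function, for which $\log|F|$ is harmonic and the identity is merely the mean value property of harmonic functions; the passage between the two forms of the right-hand side is then a rearrangement justified by Tonelli's theorem.

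I would first dispose of the \textbf{zero-free case}. If $F$ has no zeros on $\overline{B_r(0)}$, then, since $F \not\equiv 0$ has a discrete zero set in $B_R(0)$ and $\overline{B_r(0)}$ is a compact subset of $B_R(0)$, there is $r' \in (r,R)$ with $F$ nonvanishing on the simply connected domain $B_{r'}(0)$; hence $F = e^{h}$ for some $h \in \mathcal{O}(B_{r'}(0))$, and $\log|F| = \re h$ is harmonic there. The mean value property then yields $\frac{1}{2\pi}\int_0^{2\pi} \log|F(re^{i\theta})|\,d\theta = \re h(0) = \log|F(0)|$, which is the claim, the list $z_1,\dots,z_n$ being empty. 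For the \textbf{general case, assuming no zeros lie on} $\partial B_r(0)$, let $z_1,\dots,z_n$ be the zeros of $F$ in $B_r(0)$ counted with multiplicity (all nonzero, since $F(0)\neq 0$), and set $B(z)=\prod_{j=1}^{n} \frac{r(z-z_j)}{r^2-\overline{z_j}\,z}$. Each factor is holomorphic on a neighbourhood of $\overline{B_r(0)}$ (its pole $r^2/\overline{z_j}$ has modulus $>r$ because $|z_j|<r$), has a simple zero precisely at $z_j$, and is unimodular on $\partial B_r(0)$ by a direct computation. Thus $g\coloneqq F/B$ is holomorphic and zero-free near $\overline{B_r(0)}$, the zero-free case applies to $g$, and using $|B|\equiv 1$ on $\partial B_r(0)$ together with $|B(0)| = \prod_{j=1}^{n}|z_j|/r$ one gets $\frac{1}{2\pi}\int_0^{2\pi}\log|F(re^{i\theta})|\,d\theta = \log|g(0)| = \log|F(0)| + \sum_{j=1}^{n}\log(r/|z_j|)$, which is the first equality.

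The second equality is then a rearrangement: $\sum_{j=1}^{n}\log(r/|z_j|) = \sum_{j=1}^{n}\int_{|z_j|}^{r}\frac{dt}{t} = \int_0^{r}\frac{\#\{j : |z_j| \le t\}}{t}\,dt = \int_0^{r}\frac{n(t)}{t}\,dt$ by Tonelli's theorem. Finally I would remove the restriction that $F$ be zero-free on $\partial B_r(0)$ by letting $\rho \uparrow r$ through the (cofinite set of) radii $\rho$ for which $F$ has no zero on $\partial B_\rho(0)$ and passing to the limit: the right-hand side is continuous in $\rho$, and since a zero on $\partial B_r(0)$ contributes $\log(r/r)=0$ to the sum, its limit is the asserted value; the left-hand side converges because near a boundary zero $re^{i\theta_j}$ one has $\log|F(\rho e^{i\theta})| = \log|\rho e^{i\theta} - re^{i\theta_j}| + O(1)$, a singularity integrable uniformly in $\rho$ close to $r$, so dominated convergence applies.

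The step I expect to be the genuine obstacle is exactly this boundary case: when $|z_j| = r$ the Blaschke factor degenerates to a unimodular constant and no longer cancels the zero, so the clean division argument breaks down and one must argue through the limiting procedure above (alternatively, one can apply Green's identity to the subharmonic function $\log|F|$, whose distributional Laplacian is $2\pi$ times the sum of unit masses at the zeros of $F$). Everything else is bookkeeping around the mean value property.
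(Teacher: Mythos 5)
Your proof is correct. Note that the paper does not prove this statement at all: Jensen's formula is quoted as a classical result with a citation to Young, so there is nothing to compare against. What you wrote is the standard textbook argument — divide by the finite Blaschke product $\prod_j \frac{r(z-z_j)}{r^2-\overline{z_j}z}$ adapted to $\overline{B_r(0)}$, apply the mean value property of the harmonic function $\log|g|$ for the zero-free quotient, and convert the sum $\sum_j \log(r/|z_j|)$ into $\int_0^r n(t)\,t^{-1}\,dt$ by Tonelli (the integral being unproblematic at $0$ since $F(0)\neq 0$ forces $n(t)=0$ near $0$). Your handling of the boundary case is also sound: since the zeros in $\overline{B_r(0)}$ are finitely many, the right-hand side at radius $\rho<r$ close to $r$ involves exactly the zeros with $|z_j|<r$, zeros on $\partial B_r(0)$ contribute $\log(r/r)=0$, and the left-hand side passes to the limit by dominated convergence because $\log|\rho e^{i\theta}-re^{i\theta_k}|$ is bounded below by $\log(c\,|\theta-\theta_k|)$ uniformly for $\rho$ near $r$, which is integrable in $\theta$. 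No gaps.
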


Utilizing Jensen's formula, we are well-equipped to prove the following uniqueness statement.

\begin{proposition}\label{lma:uniqueness_complex_variables}
Let $b = (b_1,\dots,b_d) \in \R^d_{>0}$. For each $j \in \{ 1,\dots, d \}$ let
$$
\Lambda_j \coloneqq \{ \pm \lambda_j(k) : k \in \N_0 \} \subseteq \R
$$
where $\lambda_j : \N_0 \to \R_{>0}$ is an increasing function. If
$$
\liminf_{k \to \infty} \frac{\lambda_j(k)}{\sqrt{k}} < \frac{1}{\sqrt{b_j e}}
$$
for every $j \in \{ 1,\dots, d \}$, then $\Lambda \coloneqq \Lambda_1 \times \dots \times \Lambda_d$ is a uniqueness set for $\mathcal{O}^b(\Cd)$. If on the other hand,
$$
\liminf_{k \to \infty} \frac{\lambda_j(k)}{\sqrt{k}} > \sqrt{\frac{\pi}{b_j}}
$$
for some $j \in \{ 1,\dots, d \}$, then $\Lambda \coloneqq \Lambda_1 \times \dots \times \Lambda_d$ is not a uniqueness set for $\mathcal{O}^b(\Cd)$.
\end{proposition}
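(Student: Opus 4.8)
The plan is to reduce both implications to the one–dimensional case, and then to treat sufficiency by a Jensen–type zero–counting estimate and necessity by an explicit construction. Writing $\Lambda=\Lambda_1\times\dots\times\Lambda_d$, the reduction uses that for fixed $(z_2,\dots,z_d)$ the section $z_1\mapsto F(z_1,\dots,z_d)$ of any $F\in\mathcal O^b(\C^d)$ lies in $\mathcal O^{b_1}(\C)$ (with implied constant depending on the frozen variables). Hence the positive statement follows by freeing one variable at a time (formally, induction on $d$), while for the negative statement it suffices to exhibit a single $0\neq f\in\mathcal O^{b_j}(\C)$ vanishing on $\Lambda_j$ and to put $F(z_1,\dots,z_d):=f(z_j)$; this $F$ is nonzero, vanishes on $\Lambda$, and lies in $\mathcal O^b(\C^d)$ since $\prod_i e^{b_i|z_i|^2}\ge e^{b_j|z_j|^2}$. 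Thus everything comes down to $\Lambda_1=\{\pm\lambda(k):k\in\N_0\}$ with $\lambda$ increasing.

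\emph{Sufficiency.} Assuming $\liminf_k\lambda(k)/\sqrt k<1/\sqrt{be}$, I would take $f\in\mathcal O^b(\C)$ vanishing on $\Lambda_1$ and suppose $f\not\equiv0$ for contradiction; after dividing out a power of $z$ we may assume $f(0)\neq0$, and since $\lambda$ is increasing we may assume $\lambda(k)\to\infty$ (otherwise the zeros accumulate). From $|f(z)|\lesssim e^{b|z|^2}$ one gets $\log M(R)\le bR^2+O(1)$, and Jensen's formula then gives $\int_0^{R}\frac{n(t)}{t}\,dt\le bR^2+O(1)$ for the zero–counting function $n$ of $f$. For any $\alpha>1$, monotonicity of $n$ yields $n(r)\log\alpha\le\int_r^{\alpha r}\frac{n(t)}{t}\,dt\le b\alpha^2r^2+O(1)$, i.e.\ $n(r)\le\frac{b\alpha^2}{\log\alpha}\,r^2+O(1)$; optimizing over $\alpha$ (the minimum of $\alpha^2/\log\alpha$ is $2e$, attained at $\alpha=\sqrt e$) gives the sharp bound $\limsup_{r\to\infty}n(r)/r^2\le 2eb$. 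But $f$ vanishes at the $2(k+1)$ distinct points $\pm\lambda(0),\dots,\pm\lambda(k)$, so $n(\lambda(k))\ge 2k$; picking $k_m\to\infty$ with $\lambda(k_m)/\sqrt{k_m}\to L:=\liminf_k\lambda(k)/\sqrt k<1/\sqrt{be}$ gives $n(\lambda(k_m))/\lambda(k_m)^2\ge 2k_m/\lambda(k_m)^2\to 2/L^2>2eb$ (with $2/L^2:=+\infty$ when $L=0$), a contradiction. Hence $f\equiv0$, so $\Lambda_1$ is a uniqueness set.

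\emph{Necessity.} Here the naive choice fails: the canonical Weierstrass product over $\{\pm\lambda(k)\}$ has all its zeros real and, already for $\lambda(k)\asymp\sqrt k$, is of infinite type at order two (for $\lambda(k)=c\sqrt k$ it equals $e^{\gamma z^2/c^2}/\Gamma(1-z^2/c^2)$, which grows like $e^{(2/c^2)x^2\log x}$ on $\R$). The remedy is to balance the real zeros with imaginary ones. Assuming $\liminf_k\lambda(k)/\sqrt k>\sqrt{\pi/b}$, pick $\beta$ with $\sqrt{\pi/b}<\beta<\liminf_k\lambda(k)/\sqrt k$, so that $\lambda(k)\ge\beta\sqrt k$ for all large $k$ and $\pi/\beta^2<b$, and set
\[
f(z):=\prod_{k=0}^{\infty}\Bigl(1-\frac{z^4}{\lambda(k)^4}\Bigr),
\]
which converges ($\sum_k\lambda(k)^{-4}<\infty$), is entire, has $f(0)=1$, and vanishes at every $\pm\lambda(k)$. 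The growth is controlled by comparison with the sparser zeros $\{\beta\sqrt k\}$: for $|z|\to\infty$,
\[
\log|f(z)|\le\sum_{k\ge0}\log\Bigl(1+\frac{|z|^4}{\lambda(k)^4}\Bigr)\le O(\log|z|)+\sum_{k\ge1}\log\Bigl(1+\frac{(|z|^2/\beta^2)^2}{k^2}\Bigr),
\]
and since $\prod_{k\ge1}(1+x^2/k^2)=\sinh(\pi x)/(\pi x)$ the last sum equals $\log\bigl(\sinh(\pi|z|^2/\beta^2)/(\pi|z|^2/\beta^2)\bigr)\le(\pi/\beta^2)|z|^2$. Hence $|f(z)|\lesssim e^{(\pi/\beta^2)|z|^2}\,\mathrm{poly}(|z|)\lesssim e^{b|z|^2}$, so $f\in\mathcal O^b(\C)\setminus\{0\}$ vanishes on $\Lambda_1$, and by the reduction $\Lambda$ is not a uniqueness set.

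\emph{Main obstacle.} In the sufficiency part the real content is squeezing the sharp constant $2e$ out of Jensen's formula — the $\alpha=\sqrt e$ optimization — which is exactly what pins down the threshold $1/\sqrt{be}$. In the necessity part the subtlety is recognizing that the extremal product over the prescribed (real) zeros must be discarded because it is of infinite type, whereas adjoining a matching family of purely imaginary zeros — encoded compactly by the $z^4$ above, equivalently by $\sin(\pi z^2/\beta^2)$ in the model case $\lambda(k)=\beta\sqrt k$ — restores order–two growth of type precisely $\pi/\beta^2$, which is $<b$ by the choice of $\beta$.
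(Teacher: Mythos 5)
Your proposal is correct and takes essentially the same route as the paper: the sufficiency part is the same Jensen-formula argument with the optimization at $s=\sqrt{e}$ (your constant $2e$ is exactly the paper's $\sup_{s>1}\sqrt{2\log s/(bs^2)}=1/\sqrt{be}$), and the reduction to $d=1$ by freezing variables matches the paper's induction. Your counterexample $\prod_{k\ge 0}\bigl(1-z^4/\lambda(k)^4\bigr)$ is the paper's construction $\tilde F$ in disguise (their $F(z^2)$ with the finitely many exceptional factors absorbed into a $O(\log|z|)$ term), with the same comparison of the product against $\sinh$ to get type $<b$.
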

\begin{proof}
\textbf{Step 1.} We prove the first part of the statement via induction over the dimension $d \in \N$. To this end, let $d=1$, let $\Lambda \coloneqq \Lambda_1$ with $\Lambda = \{ \pm \lambda(k) : k \in \N_0 \}$, and let $b \coloneqq b_1$. Suppose that $F : \C \to \C$ is an entire function which satisfies the bound  $|F(z)| \lesssim e^{b|z|^2}$. Without loss of generality we may suppose that $F(0)=1$. For if $F(0) \notin \{ 0,1 \}$, then one may use a scaling argument, and if $F$ has a zero of order $m$ at zero then one may consider $F(z)/z^m$. Let $n(r)$ denote the number of zeros of $F$ in the closed ball $\overline{B_r(0)} \coloneqq \{ z \in \C : |z| \leq r\} \subseteq \C$ of radius $r>0$. Combining the assumption that $F(0)=1$ with Jensen's formula implies the identity
$$
\int_0^r \frac{n(t)}{t} \, dt = \frac{1}{2\pi} \int_0^{2\pi} \log |F(re^{i\theta})| \, d\theta.
$$
Since $|F(z)| \leq C e^{b|z|^2}$ for some constant $C>0$, the previous equality yields the estimate
$$
\int_0^r \frac{n(t)}{t} \, dt \leq \log C + br^2.
$$
Now let $s>1$. The property of $t \mapsto n(t)$ being non-decreasing shows that
$$
n(r)\log s = n(r) \int_r^{sr} \frac{1}{t} \, dt \leq \int_r^{sr} \frac{n(t)}{t} \, dt \leq \log C + bs^2r^2.
$$
Assume that $F$ vanishes on $\Lambda \setminus \{ 0 \}$ (excluding zero because of the assumption that $F(0)$ is, without loss of generality, equal to one), but does not vanish identically. Choosing $r=\lambda(k)$ and $s>1$ yields $n(r) \geq  2k$ and
$$
2k \log s \leq \log C + bs^2\lambda(k)^2.
$$
This in turn shows the validity of the inequality
\begin{equation}\label{eq:square_root_bound}
    \sqrt{\frac{2\log s}{b s^2}} \leq \sqrt{\frac{\log C}{b s^2k}} + \frac{\lambda(k)}{\sqrt{k}},
\end{equation}
where we used the elementary estimate $\sqrt{p+q} \leq \sqrt{p}+\sqrt{q}$ for $p,q \geq 0$.
Maximizing the left-hand side of Equation \eqref{eq:square_root_bound} with respect to $s>1$ gives
$$
\sup_{s>1} \sqrt{\frac{2\log s}{bs^2}} = \frac{1}{\sqrt{be}},
$$
resulting in the lower $\liminf$-bound
$$
\frac{1}{\sqrt{be}} \leq \liminf_{k \to \infty} \frac{\lambda(k)}{\sqrt{k}}.
$$
Hence, if
$$
\liminf_{k \to \infty} \frac{\lambda(k)}{\sqrt{k}} < \frac{1}{\sqrt{be}},
$$
then this gives a contradiction and $F$ must vanish identically.

To perform the induction step, we suppose that the assertion holds for $d-1$. For a fixed $p \in \Lambda_1$, the map
$$
(z_2,\dots, z_d) \mapsto F(p,z_2,\dots,z_d)
$$
is an entire function of $d-1$ complex variables which belongs to $\mathcal{O}^{b'}(\C^{d-1})$, where $b' = (b_2,\dots, b_d)^T \in \R_{>0}^{d-1}$. This function vanishes on $\Lambda_2 \times \dots \times \Lambda_d$. By the induction hypothesis, it must vanish identically. Now fix $(p_2,\dots, p_d) \in \C^{d-1}$ and consider the map
$$
S(z) \coloneqq F(z,p_2,\dots,p_d).
$$
Then $S$ is an entire function which vanishes on $\Lambda_1$ and satisfies the bound
$$
|S(z)| \lesssim \left ( \prod_{j=2}^d e^{b_j|p_j|^2} \right ) e^{b_1 y_1^2}, \quad z=x_1 + i y_1.
$$
Therefore, it holds that $S \in \mathcal{O}^{b_1}(\C)$. The case $d=1$ shows that $S$ vanishes identically and the arbitrariness of $(p_2,\dots, p_d) \in \C^{d-1}$ concludes the proof of the statement.

\textbf{Step 2.} To prove the second part of the statement, we again start with $d=1$. Let $\Lambda \coloneqq \Lambda_1$ with $\Lambda = \{ \pm \lambda(k) : k \in \N_0 \}$, and let $b \coloneqq b_1$.
Suppose that $\liminf_{k \to \infty} \frac{\lambda(k)}{\sqrt{k}} > \sqrt{\frac{\pi}{b}}$. Let $\varepsilon>0$ such that $\varepsilon \in (0,b)$, and that
$$
\liminf_{k \to \infty} \frac{\lambda(k)}{\sqrt{k}} \geq \sqrt{\frac{\pi}{b-\varepsilon}}.
$$
It follows that there exists an integer $K \in \N$ such that
\begin{equation}\label{gegen}
    \lambda(k)^2 \geq \frac{\pi}{b-\varepsilon} k, \quad k \geq K.
\end{equation}
Define $\gamma(k) \coloneqq \lambda(k)^2$ and $\Gamma \coloneqq \{ \gamma(k) : k \geq K \}$. The Weierstrass factorization of the hyperbolic sine is given by
$$
\sinh(z) = z \prod_{k=1}^\infty \left ( 1 + \frac{z^2}{\pi^2 k^2} \right ).
$$
The lower bound given in \eqref{gegen} implies that
\begin{equation}
    \begin{split}
        \sinh(|z|) \gtrsim \prod_{k=K}^\infty \left ( 1 + \frac{|z|^2}{(b-\varepsilon)^2 \gamma(k)^2} \right ) \geq \left | \prod_{k=K}^\infty \left ( 1 - \frac{z^2}{(b-\varepsilon)^2 \gamma(k)^2} \right ) \right |.
    \end{split}
\end{equation}
Defining $F(z) \coloneqq \prod_{k=K}^\infty 1 - \frac{z^2}{\gamma(k)^2}$, it follows from the previous estimate and the definition of the hyperbolic sine, that
$$
|F(z)| \lesssim \sinh((b-\varepsilon)|z|) \lesssim e^{(b-\varepsilon)|z|}.
$$
Observe that $F$ is a non-trivial entire function that vanishes on $\Gamma$ (in fact, it vanishes on $\Gamma \cup (-\Gamma)$). Now define
$$
\tilde F(z) \coloneqq \prod_{k=1}^{K-1} \left ( 1 - \frac{z^2}{\lambda(k)^2} \right ) F(z^2).
$$
Then $\tilde F$ vanishes on $\{ \pm \sqrt{\gamma(k)} : k \geq K \} \cup \{ \pm \lambda(k) : k=1,\dots, K-1 \} = \Lambda$. Moreover,
$$
|\tilde F(z)| \lesssim e^{b|z|^2},
$$
which shows that $\tilde F \in \mathcal{O}^b(\C)$. Since $\tilde F$ is not the zero function, it follows that $\Lambda$ is not a uniqueness set for $\mathcal{O}^b(\C)$. The general statement for $\mathcal{O}^b(\Cd)$ follows from the one for $\mathcal{O}^b(\C)$: if $\Lambda_j$ satisfies $\liminf_{k \to \infty} \frac{\lambda_j(k)}{\sqrt{k}} > \sqrt{\frac{\pi}{b_j}}$, then there exists a non-trivial function $\tilde F \in \mathcal{O}^{b_j}(\C)$ that vanishes on $\Lambda_j$. Consequently, the function
$$
H(z_1, \dots, z_d) \coloneqq \tilde F(z_j)
$$
is non-trivial, it vanishes on $\C^{j-1} \times \Lambda_j \times \C^{d-j} \supseteq \Lambda_1 \times \cdots \times \Lambda_d$, and it belongs to $\mathcal{O}^{b}(\C^d)$. This yields the assertion.
\end{proof}

\section{Proof of the main results}\label{sec:3}

This section is devoted to the proofs of Theorem \ref{thm:main_result}, Corollary \ref{cor:polynomial_times_gauss}, and Corollary \ref{cor:hermite}. After proving the statements, we provide a list of remarks concerning extensions and comparisons to previous and future work.
We start with the proof of Theorem \ref{thm:main_result}. To this end, we introduce the following notation: if $p,q \in [1,\infty]$ are conjugate to each other, i.e., $\frac{1}{p}+\frac{1}{q}=1$, and if $v \in L^p(\Rd)$, $w \in L^q(\Rd)$, we define
$$
\langle v,w \rangle \coloneqq \langle v,w \rangle_{L^p(\Rd) \times L^q(\Rd)} \coloneqq \int_\Rd v(t)\overline{w(t)} \, dt.
$$
For a function $f : \Cd \to \C$, the shift of $f$ by $\nu \in \Cd$ is defined via
$$
T_\nu f(t) \coloneqq f(t-\nu),
$$
and its reflection is the map
$$
\mathcal{R}f(t)=f(-t).
$$
The Fourier transform of a function $f \in L^1(\Rd)$ is defined by
$$
\ft f(\omega) = \int_\Rd f(t)e^{-2 \pi i \omega \cdot t} \, dt
$$
where $\omega \cdot t = \sum_{j=1}^d \omega_j t_j$. The extension of $\ft$ from $L^1(\Rd) \cap \ltd$ to a unitary operator on $\ltd$ is carried out in the usual way. Notice that the short-time Fourier transform satisfies the identity $V_gf(x,\omega) = \ft(f \overline{T_x g} )(\omega)$. Finally, we define for a vector $\omega \in \Cd$ and $f : \Cd \to \C$ the map $f_\omega : \Cd \to \C$ by
$$
f_\omega(t) \coloneqq (T_\omega f(t))\overline{f( \overline{t} )}.
$$
We call $f_\omega$ the tensor product of $f$. If $f$ is assumed to be entire, then for every $\omega \in\C^d$, and every $t \in \C^d$, both maps
$$
\omega \mapsto f_\omega(t), \quad t \mapsto f_\omega(t),
$$
define entire functions from $\C^d$ to $\C$.

In order to establish the proof of Theorem \ref{thm:main_result}, a series of preliminary statements is needed. The first of these is a well-known result that relates STFTs of the form $V_ff$ to the equivalence relation $\sim$ \cite{Groechenig,auslander}.

\begin{lemma}\label{lemma:tensor_equality}
    Let $f,h \in \ltd$ such that $V_ff(x,\omega) = V_hh(x,\omega)$ for every $(x,\omega) \in \R^{2d}$. Then $f \sim h$. 
\end{lemma}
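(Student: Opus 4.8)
The plan is to reduce the statement, via the elementary identity $V_ff(x,\omega)=\ft\!\left(f\,\overline{T_xf}\right)(\omega)$ recorded above, to the injectivity of the Fourier transform on $L^1(\Rd)$. For every fixed $x\in\Rd$ the Cauchy--Schwarz inequality shows that $f\,\overline{T_xf}\in L^1(\Rd)$ and $h\,\overline{T_xh}\in L^1(\Rd)$, so the hypothesis $V_ff\equiv V_hh$ becomes $\ft\!\left(f\,\overline{T_xf}\right)=\ft\!\left(h\,\overline{T_xh}\right)$ for every $x$. Injectivity of $\ft$ on $L^1(\Rd)$ therefore yields
\begin{equation*}
    f(t)\overline{f(t-x)}=h(t)\overline{h(t-x)}\qquad\text{for a.e. }t\in\Rd,
\end{equation*}
for every $x\in\Rd$, and by Fubini this equality holds for a.e. $(t,x)\in\R^{2d}$. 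Specialising to $x=0$ (where the step above reads $\ft(|f|^2)=\ft(|h|^2)$) gives $|f|=|h|$ almost everywhere; in particular we may assume $f\neq 0$, since otherwise $\|h\|_2=\|f\|_2=0$ and the claim is trivial.

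Next I would write $S:=\{t\in\Rd:f(t)\neq 0\}$, a set of positive Lebesgue measure which agrees up to a null set with $\{h\neq 0\}$, and decompose on $S$ the functions $f=|f|\,u$ and $h=|h|\,v$ with measurable unimodular functions $u,v:S\to\T$. Setting $w:=u\,\overline v:S\to\T$ and dividing the displayed identity by the common nonzero factor $|f(t)|\,|f(t-x)|=|h(t)|\,|h(t-x)|$ on the set of $(t,x)$ for which both $t$ and $t-x$ belong to $S$, one obtains
\begin{equation*}
    w(t)=w(t-x)\qquad\text{for a.e. }(t,x)\text{ with }t,\,t-x\in S.
\end{equation*}
After the measure-preserving change of variables $(t,s)=(t,t-x)$ this says $w(t)=w(s)$ for a.e. $(t,s)\in S\times S$.

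The step I expect to require the most care is concluding from this that $w$ equals almost everywhere a single constant $\tau$ on $S$: the set $S$ is merely measurable of positive measure, so connectedness arguments are unavailable. The standard remedy is a Fubini argument: since $\{(t,s)\in S\times S:w(t)=w(s)\}$ has full measure in $S\times S$ and $\mathcal L^d(S)>0$, there exists $s_0\in S$ with $w(t)=w(s_0)=:\tau$ for almost every $t\in S$, and $\tau\in\T$ because $|w|\equiv1$. Consequently $u=\tau v$ almost everywhere on $S$, so that $f=|f|\,u=\tau\,|h|\,v=\tau h$ almost everywhere on $S$, while $f=h=0$ almost everywhere off $S$; hence $f=\tau h$, i.e. $f\sim h$, which completes the proof.
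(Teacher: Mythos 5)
Your proof is correct. Note that the paper does not actually prove Lemma \ref{lemma:tensor_equality}: it is quoted as a well-known fact with references to Gr\"ochenig's book and to Auslander--Tolimieri, where the standard argument is essentially the reduction you carry out — invert the Fourier transform in $\omega$ to get $f(t)\overline{f(t-x)}=h(t)\overline{h(t-x)}$, i.e.\ equality a.e.\ of the rank-one tensors $f\otimes\bar f=h\otimes\bar h$ after the change of variables $s=t-x$, and then extract a single unimodular constant. Your treatment of the two genuinely delicate points is sound: the Tonelli step upgrading ``for every $x$, for a.e.\ $t$'' to ``for a.e.\ $(t,x)$'' (legitimate because the exceptional set is measurable and all its $x$-sections are null), and the Fubini selection of $s_0\in S$ showing that the phase ratio $w=u\overline{v}$ is a.e.\ equal to a constant $\tau\in\T$ on $S$, which correctly avoids any connectedness argument on the merely measurable set $S$. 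The textbook versions typically fix a point $s_0$ with $h(s_0)\neq 0$ and set $f=\bigl(\overline{h(s_0)}/\overline{f(s_0)}\bigr)h$ directly, deducing $|\tau|=1$ from equality of norms; your variant, which first establishes $|f|=|h|$ from the $x=0$ slice and then works only with unimodular factors, is marginally more careful about null sets but otherwise the same idea. In short, you have supplied a complete, self-contained proof of a lemma the paper only cites.
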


The second lemma pertains to parameter integrals with respect to a complex variable \cite{swi,mattner}.

\begin{lemma}\label{lma:holomorphic_integral}
 Let $F : \Rd \times \C^d \to \C$ be a function subjected to the following assumptions:
 \begin{enumerate}
     \item $F(\cdot,z)$ is Lebesgue-measurable for every $z \in \C^d$,
     \item $F(t,\cdot)$ is an entire function for every $t \in \Rd$,
     \item The function $z \mapsto \int_\Rd |F(t,z)| \, dt$ is locally bounded, that is, for every $z_0 \in \C^d$ there exists a positive constant $\delta > 0$ such that
     $$
     \sup_{\substack{z \in \C^d \\ |z-z_0| \leq \delta}} \int_\Rd |F(t,z)| \, dt < \infty.
     $$
 \end{enumerate}
Then $z \mapsto \int_\Rd F(t,z) \, dt$ defines an entire function of $d$ complex variables.
\end{lemma}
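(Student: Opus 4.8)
The plan is to recognize this as a variant of the classical theorem on holomorphy of parameter integrals (as in \cite{swi,mattner}): that theorem assumes the existence, near every point, of an integrable majorant for $t \mapsto F(t,z)$ that is uniform in $z$, and the entire role of hypotheses (1)--(3) here is to let one \emph{build} such a majorant. Accordingly the argument splits into two parts — producing the local majorant out of assumption (3), and then running the standard continuity/Morera/Osgood machinery.

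For the first part I would fix $z_0 \in \Cd$ and use assumption (3) to choose $\delta>0$ with $M := \sup_{|z-z_0|\le 2\delta}\int_\Rd |F(t,z)|\,dt < \infty$. Since $F(t,\cdot)$ is entire for each $t$, the iterated Cauchy integral formula over the distinguished boundary $\Gamma := \{\xi \in \Cd : |\xi_j - (z_0)_j| = 2\delta,\ j=1,\dots,d\}$ represents $F(t,z)$, for every $z$ with $|z-z_0|\le\delta$, as an integral of the restriction of $F(t,\cdot)$ to $\Gamma$ against a kernel bounded by a constant depending only on $d$ and $\delta$; this gives $|F(t,z)| \le C_{d,\delta}\int_\Gamma |F(t,\xi)|\,d\sigma(\xi) =: \Phi(t)$ for all such $z$. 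A Tonelli argument — using that $F$ is jointly measurable, being separately measurable in $t$ and separately continuous in $z$ — together with a second application of assumption (3) yields $\int_\Rd \Phi(t)\,dt = C_{d,\delta}\int_\Gamma \big(\int_\Rd|F(t,\xi)|\,dt\big)\,d\sigma(\xi) \le C_{d,\delta}\,\sigma(\Gamma)\,M < \infty$, so $\Phi \in L^1(\Rd)$ and $|F(\cdot,z)|\le\Phi$ uniformly for $z$ in the ball $B := \{z : |z-z_0|<\delta\}$.

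With the majorant in hand, set $G(z) := \int_\Rd F(t,z)\,dt$, which is finite on $B$. I would first obtain continuity of $G$ on $B$ from the dominated convergence theorem, using continuity of $z\mapsto F(t,z)$ and the majorant $\Phi$. Next I would verify that $G$ is holomorphic in each variable separately on $B$: restrict $G$ to a complex line meeting $B$, integrate around the boundary of an arbitrary triangle contained in $B$, interchange the two integrations (justified by $\Phi$ and compactness of the triangle), and invoke Cauchy's integral theorem for the entire function $F(t,\cdot)$ to conclude that the contour integral vanishes; since $G$ is continuous, Morera's theorem then applies. Finally, a continuous, separately holomorphic function is holomorphic (Osgood's lemma), so $G$ is holomorphic on $B$, and since $z_0 \in \Cd$ was arbitrary, $G$ is entire.

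The only non-routine step — and hence what I expect to be the main obstacle — is the construction of $\Phi$, that is, upgrading the mere integrability control of assumption (3) to a genuine pointwise-in-$z$, $L^1$-in-$t$ dominating function; the Cauchy-formula-plus-Tonelli device above is precisely what accomplishes this, and once $\Phi$ is available the remaining continuity/Morera/Osgood steps are entirely standard.
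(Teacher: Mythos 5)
Your proof is correct, and it is essentially the standard argument: the paper does not prove this lemma but delegates it to the cited references (Mattner's theorem on complex differentiation under the integral), whose proof proceeds exactly as you do — Cauchy's integral formula on a polydisc to upgrade the local boundedness of $z \mapsto \int_\Rd |F(t,z)|\,dt$ to a local $L^1$ majorant via Tonelli, then dominated convergence, Fubini--Morera, and Osgood/Hartogs. Only a cosmetic point: choose the polyradius small enough that the distinguished boundary stays inside the ball furnished by assumption (3) (a factor of $\sqrt{d}$ if $|\cdot|$ is the Euclidean norm), which affects nothing.
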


To proceed, we combine Lemma \ref{lemma:tensor_equality} with Lemma \ref{lma:holomorphic_integral} to make a first assertion on STFT phase retrieval with window in $\mathcal{O}_a^b(\Cd)$. This analysis is made under the assumption that complete spectrograms are available. The discretization step is performed in the subsequent stage.

\begin{corollary}\label{cor}
    Let $a,b \in \R_{>0}^d$, and let $0 \neq \varphi \in \mathcal{O}_a^b(\Cd)$. If $f,h \in \ltd$ are such that $|V_\varphi f(x,\omega)| = |V_\varphi h(x,\omega)|$ for every $(x,\omega) \in \R^{2d}$, then $f \sim h$.
\end{corollary}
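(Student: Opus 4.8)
The plan is to invoke the classical identity linking the spectrogram $|V_\varphi f|^2$ to the product of the ambiguity functions $V_ff$ and $V_\varphi\varphi$, thereby reducing the claim to the equality $V_ff = V_hh$ on $\R^{2d}$, whereupon Lemma~\ref{lemma:tensor_equality} yields $f\sim h$; the analyticity of $\varphi$ enters only to guarantee that $V_\varphi\varphi$ can be divided out. First, writing $V_\varphi f(x,\omega)=\ft(f\,\overline{T_x\varphi})(\omega)$ and expanding $|V_\varphi f(x,\omega)|^2$ as a double integral — absolutely convergent for fixed $x$ since $f,T_x\varphi\in\ltd$ — the substitution $s\mapsto t-u$ together with Fubini gives $|V_\varphi f(x,\omega)|^2=\ft(\Psi_f(x,\cdot))(\omega)$ for every $x\in\Rd$, where
\[
  \Psi_f(x,u)\coloneqq \int_\Rd f(t)\,\overline{f(t-u)}\,\overline{\varphi(t-x)}\,\varphi(t-u-x)\,dt.
\]
Being the autocorrelation of $f\,\overline{T_x\varphi}\in L^1\cap L^2$, the function $u\mapsto\Psi_f(x,u)$ lies in $L^1(\Rd)\cap C_0(\Rd)$; hence $|V_\varphi f|^2=|V_\varphi h|^2$ and injectivity of the Fourier transform force $\Psi_f\equiv\Psi_h$ on $\R^{2d}$.

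Next, I would take a Fourier transform in the variable $x$. The Gaussian decay of $\varphi$ on $\Rd$ built into the definition of $\mathcal{O}_a^b(\Cd)$ makes $x\mapsto\Psi_f(x,u)$ integrable, uniformly for $u$ in compact sets, so Fubini applies once more, and the change of variables $x\mapsto t-s$ yields the factorisation
\[
  \int_\Rd \Psi_f(x,u)\,e^{-2\pi i v\cdot x}\,dx \;=\; V_ff(u,v)\,\overline{V_\varphi\varphi(u,v)},\qquad (u,v)\in\R^{2d},
\]
and likewise with $h$ in place of $f$. Together with the previous step this yields $V_ff\cdot\overline{V_\varphi\varphi}=V_hh\cdot\overline{V_\varphi\varphi}$ on all of $\R^{2d}$.

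Finally, I would divide out $V_\varphi\varphi$. Since $\varphi$ is entire and satisfies the bound defining $\mathcal{O}_a^b(\Cd)$, the integrand $t\mapsto\varphi(t)\,\overline{\varphi(t-x)}\,e^{-2\pi i\omega\cdot t}$ — where $\overline{\varphi}$ now stands for the conjugate entire function $z\mapsto\overline{\varphi(\overline z)}$, which retains the same Gaussian decay on $\Rd$ — is entire in $(x,\omega)\in\C^{2d}$, and the same bound makes $(x,\omega)\mapsto\int_\Rd|\varphi(t)\,\overline{\varphi(t-x)}\,e^{-2\pi i\omega\cdot t}|\,dt$ locally bounded on $\C^{2d}$. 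Lemma~\ref{lma:holomorphic_integral} (applied with $d$ replaced by $2d$) then shows that $V_\varphi\varphi$ extends to an entire function on $\C^{2d}$, so its restriction to $\R^{2d}$ is real-analytic; it is not identically zero because $V_\varphi\varphi(0,0)=\|\varphi\|_{\ltd}^2\neq 0$, and therefore its zero set is closed with empty interior and Lebesgue measure zero. On the complementary open dense set one obtains $V_ff=V_hh$, and since $V_ff$ and $V_hh$ are continuous this equality extends to all of $\R^{2d}$; Lemma~\ref{lemma:tensor_equality} then delivers $f\sim h$.

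The absolute-convergence and Fubini verifications in the first two steps are routine. The substantive point — and the one I expect to be the main obstacle — is the last step: one must use the analyticity of $\varphi$ (through Lemma~\ref{lma:holomorphic_integral}) to conclude that the ambiguity function $V_\varphi\varphi$ vanishes only on a Lebesgue-null set, which is precisely what upgrades $V_ff\cdot\overline{V_\varphi\varphi}=V_hh\cdot\overline{V_\varphi\varphi}$ to $V_ff=V_hh$ everywhere.
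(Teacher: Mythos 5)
Your proof is correct and follows essentially the same route as the paper: both express the spectrogram as the Fourier transform in $\omega$ of the autocorrelation $\Psi_f(x,\cdot)$, use Fourier injectivity, divide out a window-only factor whose real zero set is Lebesgue-null thanks to the entire extension supplied by Lemma~\ref{lma:holomorphic_integral}, and conclude via continuity and Lemma~\ref{lemma:tensor_equality}. The only organizational difference is that you also take the Fourier transform in $x$ globally, producing the factor $\overline{V_\varphi\varphi}$ on $\R^{2d}$, whereas the paper fixes $\omega$ and divides out $\ft(\mathcal{R}\varphi_\omega)$ slice-wise, which is the same factor $\overline{V_\varphi\varphi(\omega,\cdot)}$ in disguise.
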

\begin{proof}
    Consider the absolute value of the STFT of a function $f \in \ltd$ with respect to the window function $\varphi$ in the second argument, i.e., the map $\omega \mapsto |V_\varphi f(x,\omega)|^2$ for some fixed $x \in \Rd$. Using the definition of the STFT as the Fourier transform of the product of $f$ with a shift of $\varphi$ yields the identity
\begin{equation}\label{eq:spec_2nd_arg1}
    \begin{split}
        |V_\varphi f(x,\cdot)|^2 & = \ft(f\overline{T_x\varphi }) \overline{\ft(f\overline{T_x\varphi })} = \ift \mathcal{R} (f\overline{T_x\varphi }) \ift (\overline{f}T_x\varphi ) \\
        & = \ift ( \mathcal{R} (f\overline{T_x\varphi }) * (\overline{f}T_x\varphi )).
    \end{split}
\end{equation}
The convolution $\mathcal{R} (f\overline{T_x\varphi }) * (\overline{f}T_x\varphi )$ evaluated at $s \in \Rd$ is given by
\begin{equation}\label{eq:spec_2nd_arg2}
    \begin{split}
        \mathcal{R} (f\overline{T_x\varphi }) * (\overline{f}T_x\varphi )(s) & = \int_\Rd f(-(s-t)) \overline{\varphi (-(s-t)-x)} \overline{f(t)} \varphi (t-x) \, dt \\
        & = \int_\Rd f_s(t) \overline{T_x \varphi _s(t)} \, dt \\
        & = \langle f_s , T_x \varphi _s \rangle_{L^1(\Rd) \times L^\infty(\Rd)} \\
        & = \langle f_s , T_x \varphi _s \rangle.
    \end{split}
\end{equation}
Combining Equation \eqref{eq:spec_2nd_arg1} with Equation \eqref{eq:spec_2nd_arg2} shows that the spectrogram satisfies the identity
\begin{equation}\label{eq:ift}
    |V_\varphi f(x,\omega)|^2 = \ift(t \mapsto \langle f_t, T_x \varphi_t \rangle)(\omega).
\end{equation}
Hence, if $h \in \ltd$ is such that $|V_\varphi f| = |V_\varphi h|$, then
$$
\langle f_\omega, T_x \varphi_\omega \rangle = \langle h_\omega, T_x \varphi_\omega \rangle, \quad (x,\omega) \in \R^{2d}.
$$
It follows, that for every fixed $\omega \in \Rd$, the map
$$
Q_\omega(x) \coloneqq \langle f_\omega - h_\omega, T_x \varphi_\omega \rangle = (f_\omega - h_\omega) * (\mathcal{R}(\varphi_\omega))(x)
$$
vanishes identically. Applying the Fourier transform to $Q_\omega$ yields the relation
$$
\ft Q_\omega(x) = \ft(f_\omega - h_\omega)(x)\ft (\mathcal{R}(\varphi_\omega))(x) = 0,
$$
which holds for every $x \in \Rd$. Observe that the function $\mathcal{R}(\varphi_\omega)$ has Gaussian decay and since $\varphi \neq 0$ it does not vanish identically. Applying Lemma \ref{lma:holomorphic_integral}, shows that the map $x \mapsto \ft (\mathcal{R}(\varphi_\omega))(x)$ extends from $\Rd$ to an entire function which does not vanish identically. In particular, the set
$$
Z = \{ x \in \Rd : \ft (\mathcal{R}(\varphi_\omega))(x) = 0 \}
$$
has $d$-dimensional Lebesgue measure zero (cf. Lemma \ref{lma:elementary_uniqueness}). Since $\ft(f_\omega - h_\omega)$ is continuous, it follows that $\ft(f_\omega - h_\omega)$ must vanish identically. The arbitrariness of $\omega \in \Rd$ shows that $\ft(f_\omega - h_\omega)$ vanishes identically for every $\omega \in \Rd$. In other words,
$$
V_ff(\omega,y) = V_hh(\omega,y)
$$
for every $(\omega,y)\in \R^{2d}$. The statement follows from Lemma \ref{lemma:tensor_equality}.

\end{proof}

The next lemma constitutes a key component in the proof of Theorem \ref{thm:main_result}. It asserts, that the modulus squared of the short-time Fourier transform, $|V_\varphi f|^2$, extends from $\R^{2d}$ to an entire function belonging to $\mathcal{O}^c(\C^{2d})$, provided that $\varphi \in \mathcal{O}_a^b(\Cd)$. The constant $c \in \R_{>0}^{2d}$ is expressed in terms of $a$ and $b$.

\begin{lemma}\label{lma:stft_inclusion}
    Let $a,b \in \R_{>0}^d$, let $\varphi \in \mathcal{O}_a^b(\Cd)$, and let
    \begin{equation}\label{eq:c}
        c \coloneqq \left ( 2b_1 , \dots, 2b_d, \frac{2\pi^2}{a_1}, \dots, \frac{2\pi^2}{a_d}  \right  ) \in \R_{>0}^{2d}.
    \end{equation}
    Then for every $f \in \ltd$, it holds that $|V_\varphi f|^2 \in \mathcal{O}^c(\C^{2d})$, i.e, $|V_\varphi f|^2$ extends from $\R^{2d}$ to an entire functions belonging to the space $\mathcal{O}^c(\C^{2d})$.
\end{lemma}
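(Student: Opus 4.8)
The plan is to exhibit $|V_\varphi f|^2$ on $\R^{2d}$ as a genuine product of two integrals, each manifestly holomorphic in $(x,\omega)\in\C^{2d}$, and then to read off the growth of the product from the Gaussian bound defining $\mathcal{O}_a^b(\Cd)$.

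\textbf{Step 1: a holomorphic factorisation.} For $z\in\Cd$ put $\varphi^{*}(z):=\overline{\varphi(\bar z)}$. Then $\varphi^{*}$ is entire, and the estimate $|\varphi(x+iy)|\lesssim\prod_{j}e^{-a_jx_j^2}e^{b_jy_j^2}$ transfers verbatim to $\varphi^{*}$, so $\varphi^{*}\in\mathcal{O}_a^b(\Cd)$. Since $\overline{\varphi(t-x)}=\varphi^{*}(t-x)$ for real $t,x\in\Rd$, one has $|V_\varphi f(x,\omega)|^2=\Phi_1(x,\omega)\,\Phi_2(x,\omega)$ for $(x,\omega)\in\R^{2d}$, where
$$
\Phi_1(x,\omega):=\int_{\Rd} f(t)\,\varphi^{*}(t-x)\,e^{-2\pi i\omega\cdot t}\,dt,\qquad \Phi_2(x,\omega):=\int_{\Rd}\overline{f(u)}\,\varphi(u-x)\,e^{2\pi i\omega\cdot u}\,du.
$$
(Equivalently, one may start from identity \eqref{eq:ift} in the proof of Corollary~\ref{cor}.) The integrand of $\Phi_1$ is measurable in $t$, entire in $(x,\omega)$ as a product of the entire functions $\varphi^{*}$ and $\omega\mapsto e^{-2\pi i\omega\cdot t}$, and — by the $\mathcal{O}_a^b$-bound for $\varphi^{*}$ together with Cauchy--Schwarz and convergence of Gaussian integrals — has locally bounded $L^1$-norm in $(x,\omega)$. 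Hence Lemma~\ref{lma:holomorphic_integral} shows that $\Phi_1$, and likewise $\Phi_2$, extend to entire functions on $\C^{2d}$; consequently $\Phi_1\Phi_2$ is entire on $\C^{2d}$ and agrees with $|V_\varphi f|^2$ on $\R^{2d}$, hence is its holomorphic extension.

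\textbf{Step 2: the growth estimate.} Write $x=\xi+i\mu$ and $\omega=\sigma+i\tau$ with $\xi,\mu,\sigma,\tau\in\Rd$, so that $|\varphi^{*}(t-x)|\lesssim\prod_j e^{-a_j(t_j-\xi_j)^2}e^{b_j\mu_j^2}$, $|\varphi(u-x)|\lesssim\prod_j e^{-a_j(u_j-\xi_j)^2}e^{b_j\mu_j^2}$, $|e^{-2\pi i\omega\cdot t}|=e^{2\pi\tau\cdot t}$ and $|e^{2\pi i\omega\cdot u}|=e^{-2\pi\tau\cdot u}$. Applying Cauchy--Schwarz to each factor and using the elementary one-dimensional identity
$$
\int_{\R}e^{-2a(s-\xi)^2\pm4\pi\tau s}\,ds=\sqrt{\tfrac{\pi}{2a}}\;e^{\pm4\pi\xi\tau+2\pi^2\tau^2/a}\qquad(a>0),
$$
one obtains
$$
|\Phi_1(x,\omega)|\lesssim\prod_{j=1}^d e^{b_j\mu_j^2}\,e^{\,2\pi\xi_j\tau_j+\pi^2\tau_j^2/a_j},\qquad
|\Phi_2(x,\omega)|\lesssim\prod_{j=1}^d e^{b_j\mu_j^2}\,e^{-2\pi\xi_j\tau_j+\pi^2\tau_j^2/a_j}.
$$
Multiplying, the cross terms $e^{\pm2\pi\xi_j\tau_j}$ cancel; since moreover $\mu_j^2\le|x_j|^2$ and $\tau_j^2\le|\omega_j|^2$,
$$
|V_\varphi f(x,\omega)|^2=|\Phi_1(x,\omega)\Phi_2(x,\omega)|\lesssim\prod_{j=1}^d e^{2b_j\mu_j^2}e^{2\pi^2\tau_j^2/a_j}\le\prod_{j=1}^d e^{2b_j|x_j|^2}\prod_{j=1}^d e^{(2\pi^2/a_j)|\omega_j|^2},
$$
which is precisely the bound defining $\mathcal{O}^{c}(\C^{2d})$ with $c$ as in \eqref{eq:c}.

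\textbf{Main obstacle.} The conceptual point is Step~1: the factor $\overline{\varphi(\,\cdot-x)}$ is \emph{not} holomorphic in $x$, and one must rewrite it as the entire function $\varphi^{*}$ evaluated at the holomorphic argument $\cdot-x$ (observing along the way that $\varphi^{*}$ remains in $\mathcal{O}_a^b(\Cd)$) before the holomorphic-extension machinery of Lemma~\ref{lma:holomorphic_integral} can be applied. The remaining work is the Gaussian bookkeeping of Step~2; the only subtlety there is that the real-part contributions $e^{\pm2\pi\xi_j\tau_j}$ produced by completing the square must cancel in the product $\Phi_1\Phi_2$, leaving a bound depending only on $|x_j|^2$ and $|\omega_j|^2$ with the stated constants $2b_j$ and $2\pi^2/a_j$.
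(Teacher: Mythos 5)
Your proof is correct, but it takes a genuinely different route from the paper. You extend the two factors of $|V_\varphi f|^2=V_\varphi f\cdot\overline{V_\varphi f}$ separately: the conjugation is absorbed into the entire function $\varphi^{*}(z)=\overline{\varphi(\bar z)}\in\mathcal{O}_a^b(\Cd)$, so that $\Phi_1$ and $\Phi_2$ are each parameter integrals with holomorphic integrands to which Lemma~\ref{lma:holomorphic_integral} applies, and the extension of $|V_\varphi f|^2$ is the product $\Phi_1\Phi_2$. The paper instead never extends $V_\varphi f$ itself: it reuses the identity \eqref{eq:ift} from the proof of Corollary~\ref{cor}, writing $|V_\varphi f(x,\omega)|^2=\int_{\Rd}\langle f_t,T_x\varphi_t\rangle e^{2\pi i\omega\cdot t}\,dt$, and extends this single Fourier-type integral $F(z,z')$ in the complex shift $z$ and the complex frequency $z'$, estimating $|\langle f_t,T_z\varphi_t\rangle|\lesssim e^{-\frac{a}{2}t^2}e^{2b\rho^2}\|f\|_2^2$ and then performing one Gaussian integral in $t$. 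The trade-off: the paper's representation produces the constants $2b_j$ and $2\pi^2/a_j$ directly with no cross terms (the real part of the complex shift is absorbed by translation inside the $L^1$-pairing), at the price of relying on the tensor-product/convolution identity; your factorization is more self-contained and even yields entire extensions of $V_\varphi f$ and $\overline{V_\varphi f}$ individually, but the sharp constants only emerge after the cancellation of the factors $e^{\pm 2\pi\xi_j\tau_j}$ in the product, which you correctly identify and carry out. Both arguments hinge on Lemma~\ref{lma:holomorphic_integral} and the same Gaussian bookkeeping, and both give exactly the exponent $c$ of \eqref{eq:c}, so your version could serve as a drop-in replacement for the paper's proof.
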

\begin{proof}
\textbf{Step 1.} Let $z = x+i\rho \in \C^d$ with $x,\rho \in \R^d$ and consider the modulus of a complex shift of $\varphi_t$ by $z$, i.e., the map $|T_{z} \varphi_t|$. Employing the definition of the class $\mathcal{O}_a^b(\C^d)$ yields the estimate
\begin{equation*}
    \begin{split}
        |T_{z} \varphi_t(y)| & = |\varphi(y-(x+i\rho)-t) | |\varphi(\overline{y-(x+i\rho)})| \\
        & \lesssim \prod_{j=1}^d e^{-a_j(y_j-x_j-t_j)^2} \prod_{j=1}^d e^{b_j\rho_j^2} \prod_{j=1}^d e^{-a_j(y_j-x_j)^2} \prod_{j=1}^d  e^{b_j\rho_j^2} \\
        & = \prod_{j=1}^d e^{-2a_j(y_j-x_j-\frac{1}{2}t_j)^2}e^{-\frac{a_j}{2}t_j^2}e^{2b_j\rho_j^2}.
    \end{split}
\end{equation*}
The modulus of the inner product of an arbitrary function $u \in L^1(\Rd)$ with a complex shift of $\varphi_t$ by $z = x+i\rho$ is therefore upper bounded by
\begin{equation}\label{eq:3}
    \begin{split}
        |\langle u, T_{z} \varphi_t \rangle | & \lesssim \int_\Rd |u(y)| \prod_{j=1}^d e^{-2a_j(y_j-x_j-\frac{1}{2}t_j)^2}e^{-\frac{a_j}{2}t_j^2}e^{2b_j\rho_j^2} \, dy \\
    & \leq \prod_{j=1}^d e^{-\frac{a_j}{2}t_j^2} e^{2b_j \rho_j^2} \| u \|_{L^1(\Rd)}.
    \end{split}
\end{equation}

Now replace in the previous estimate the function $u$ by the tensor product $f_t$ where $f \in \ltd$. The Cauchy-Schwarz inequality shows that
\begin{equation}\label{eq:replacement}
    \| f_t \|_{L^1(\Rd)} \leq \| f \|_{L^2(\Rd)}^2.
\end{equation}
Consider the Fourier integral
\begin{equation}\label{eq:fourier_integral}
    F(z,z') \coloneqq \int_\Rd \langle f_t, T_z \varphi_t \rangle e^{2 \pi i z' \cdot t} \, dt.
\end{equation}
For $z=v+i\delta \in \C^d$ with $v,\delta \in \R^d$, the Fourier integral $F$ satisfies the estimate
\begin{align*}
        |F(z,z')| & \textoverset[0]{}{\leq} \int_\Rd |\langle f_t, T_z \varphi_t \rangle| \prod_{j=1}^d e^{-2\pi \delta_j t_j} \, dt \\
        & \textoverset[0]{Eq. \eqref{eq:3},\eqref{eq:replacement}}{\leq} \| f \|_{L^2(\Rd)}^2 \prod_{j=1}^d e^{2b_j\rho_j^2} \int_\Rd \prod_{j=1}^d e^{-\frac{a_j}{2} t_j^2} e^{-2 \pi \delta_j t_j} \, dt \\
        & \textoverset[0]{}{=} \prod_{j=1}^d e^{2b_j\rho_j^2} \prod_{j=1}^d e^{\frac{2\pi^2}{a_j}\delta_j^2} \| f \|_{L^2(\Rd)}^2 \int_\Rd \prod_{j=1}^d e^{-\frac{a_j}{2}(t_j+\frac{2\pi\delta_j}{a_j})^2} \, dt \\
        & \textoverset[0]{}{=} \prod_{j=1}^d e^{2b_j\rho_j^2} \prod_{j=1}^d e^{\frac{2\pi^2}{a_j}\delta_j^2} \| f \|_{L^2(\Rd)}^2 \int_\Rd \prod_{j=1}^d e^{-\frac{a_j}{2}t_j^2} \, dt \\
        & \textoverset[0]{}{\leq} C(a) \| f \|_{L^2(\Rd)}^2 \prod_{j=1}^d e^{2b_j\rho_j^2} \prod_{j=1}^d e^{\frac{2\pi^2}{a_j}\delta_j^2}
\end{align*}
for some constant $C(a)$ depending only on $a$. Moreover, an application of Lemma \ref{lma:holomorphic_integral} shows that $F$ defines an entire function on $\C^{2d}$.

\textbf{Step 2.} We apply the bounds derived in Step 1 to the modulus squared of the STFT. To do so, we make use of Equation \eqref{eq:ift}, derived in the proof of Corollary \ref{eq:ift}. In terms of the function $F$ as defined as in Equation \eqref{eq:fourier_integral}, we have that
\begin{equation}
    |V_\varphi f(x,\omega)|^2 = \ift(t \mapsto \langle f_t, T_x \varphi_t \rangle)(\omega) = \int_\Rd \langle f_t, T_x \varphi_t \rangle e^{2 \pi i \omega \cdot  t} \, dt = F(x,\omega),
\end{equation}
for every $(x,\omega) \in \R^{2d}$. According to Step 1, the function $F$ extends from $\R^{2d}$ to an entire function of $2d$ complex variables and satisfies the growth estimate
$$
|F(z,z')| \lesssim \prod_{j=1}^d e^{2b_j|z_j|^2} \prod_{j=1}^d e^{\frac{2\pi^2}{a_j}|z_j'|^2}.
$$
This implies that for $c \in \R_{>0}^{2d}$ given as in Equation \eqref{eq:c}, we have $|V_\varphi f|^2 \in \mathcal{O}^c(\C^{2d})$.
\end{proof}

We are ready to proof Theorem \ref{thm:main_result} from Section \ref{sec:main_results}.

\begin{proof}[Proof of Theorem \ref{thm:main_result}]
    The fact that (2) implies (1) is trivial. It remains to show that (1) implies (2). To this end, define $\Psi  \coloneqq M(\sqrt{\Z})^d$ and $\Gamma  \coloneqq N(\sqrt{\Z})^d$ with $M = \mathrm{diag}(\tau_1,\dots, \tau_d)$ and $N = \mathrm{diag}(\nu_1,\dots, \nu_d)$, so that $\Lambda = \Psi  \times \Gamma $. The choice of $\tau$ and $\nu$, in conjunction with Proposition \ref{lma:uniqueness_complex_variables} implies that $\Lambda$ is a uniqueness set for the space $\mathcal{O}^c(\C^{2d})$ with
\begin{equation}
        c \coloneqq \left ( 2b_1 , \dots, 2b_d, \frac{2\pi^2}{a_1}, \dots, \frac{2\pi^2}{a_d}  \right  ) \in \R_{>0}^{2d}.
    \end{equation}
    Let $f,h \in \lt$, such that
    \begin{equation}\label{eq:iddd}
        |V_\varphi f(\lambda)| = |V_\varphi h(\lambda)|, \quad \lambda \in \Lambda,
    \end{equation}
    or, equivalently,
    \begin{equation}
        |V_\varphi f(\lambda)|^2 = |V_\varphi h(\lambda)|^2, \quad \lambda \in \Lambda.
    \end{equation}
    According to Lemma \ref{lma:stft_inclusion}, it holds that $|V_\varphi f|^2 \in \mathcal{O}^c(\C^{2d})$ and $|V_\varphi h|^2 \in \mathcal{O}^c(\C^{2d})$. Since $\Lambda$ is a uniqueness set for $\mathcal{O}^c(\C^{2d})$, it follows that $|V_\varphi f|^2 = |V_\varphi h|^2$, i.e., the spectrograms of $f$ and $h$ with respect to the window function $\varphi$, agree everywhere on $\Rd$. Corollary \ref{cor} implies that $f \sim h$.
\end{proof}

With the proof of Theorem \ref{thm:main_result} now established, we can seamlessly proceed to derive the proof for Corollary \ref{cor:polynomial_times_gauss}.

\begin{proof}[Proof of Corollary \ref{cor:polynomial_times_gauss}]
    The map $x \mapsto e^{-\gamma \| x \|_2^2}$ is the restriction to $\Rd$ of the entire function $F(z) = \prod_{j=1}^d e^{-\gamma z_j^2}$. Defining
    $$
    \tilde \gamma \coloneqq  (\underbrace{\gamma, \dots, \gamma}_{d \ \mathrm{times}} )^T \in \R^d_{>0}
    $$
    it follows that $F \in \mathcal{O}_{\tilde \gamma}^{\tilde \gamma}(\Cd)$ (cf. Proposition \ref{prop:O_properies}(4)). Since $p$ is of exponential type, Lemma \ref{lemma:expType} shows that $pF \in \mathcal{O}_{\tilde \gamma-\tilde \varepsilon}^{\tilde \gamma + \tilde \varepsilon}(\Cd)$ whenever $\tilde \varepsilon$ is of the form
    $$
    \tilde \varepsilon \coloneqq  (\underbrace{\varepsilon, \dots, \varepsilon}_{d \ \mathrm{times}} )^T \in \R^d_{>0}
    $$
    for some $\varepsilon \in (0,\gamma)$. For a sufficiently small choice of $\varepsilon$, the assumptions on $\alpha$ and $\beta$ imply that
    $$
    \alpha < \sqrt{\frac{1}{2(\gamma+\varepsilon) e}}, \quad \beta < \sqrt{\frac{\gamma-\varepsilon}{2 \pi^2 e}}.
    $$
    The statement therefore follows from Theorem \ref{thm:main_result}.
\end{proof}

Notice that Corollary \ref{cor:hermite} follows at once from Corollary \ref{cor:polynomial_times_gauss} simply by the choice $\gamma=\pi$ and the fact that every polynomial is an entire function of exponential type.

\section{Beyond diagonal generating matrices}\label{sec4}

We aim to extend the results of Section \ref{sec:3} to the situation where the generating matrix of the square-root lattice is not necessarily diagonal. To this end, we recall that an invertible matrix $S \in \R^{2d \times 2d}$ is called symplectic if $S^T \mathcal{J}S = \mathcal{J}$ where $\mathcal{J}$ denotes the standard symplectic matrix
$$
\mathcal{J} \coloneqq \begin{pmatrix} 0 & -I_d \\ I_d & 0 \end{pmatrix},
$$
and $I_d$ denotes the identity matrix in $\R^{d \times d}$. The collection of all symplectic matrices in $\R^{2d \times 2d}$ is called the symplectic group and is denoted by $\mathrm{Sp}(2d,\R)$. If $d=1$, then $\mathrm{Sp}(2d,\R) = \mathrm{Sp}(2,\R) =  \mathrm{SL}(2,\R)$, the special linear group in $\R^{2 \times 2}$. The metaplectic group $\mathrm{Mp}(d)$ is the unitary representation of the double cover of the symplectic group $\mathrm{Sp}(2d,\R)$ on $\ltd$. Briefly speaking, to every symplectic matrix $S \in \mathrm{Sp}(2d,\R)$ corresponds a unitary operator $\hat S : \ltd \to \ltd$ (defined up to a sign factor), that satisfies the relation
$$
|V_gf(S(x,\omega))| = |V_{\hat S g} (\hat S f)(x,\omega)|,
$$
for every $f, g \in \ltd, \, S \in \mathrm{Sp}(2d,\R)$ and $(x,\omega) \in \R^{2d}$ \cite[Lemma 9.4.3]{Groechenig}. The application of metaplectic operators constitutes a classical technique in time-frequency analysis for transferring statements about separable lattices to non-separable ones. An extensive exposition on metaplectic operators can be found in a book by Folland \cite{Folland+2016}.

To make matters more concrete, we focus in the following on the STFT phase retrieval problem with Gaussian window and set the dimension to $d=1$. This regime has been studied the most extensively among STFT phase retrieval problems with different window functions. A general uniqueness result for sampling on square-root lattices reads as follows.

\begin{theorem}\label{gen}
    Let $S \in \mathrm{SL}_2(\R)$ and $p,q > 0$ be given by
    $$
    S = \begin{pmatrix}
        a & b \\
        c & d
    \end{pmatrix}, \quad p=(a^2+b^2)^{-1}, \quad q = |ac+bd|.
    $$
    If $p-q>0$ and if
    \begin{equation}\label{bound_cond}
        0 < \alpha <  \frac{1}{\sqrt{2\pi e}} \min \left \{ \frac{1}{\sqrt{p-q}}, \sqrt{p+q} \right \},
    \end{equation}
    then the following statements are equivalent for every $f,h \in \lt$:
    \begin{enumerate}
    \item $|V_\varphi f(\lambda)| = |V_\varphi h(\lambda)|$ for every $\lambda \in \alpha S (\sqrt{\Z})^{2}$,
    \item $f \sim h$.
\end{enumerate}
\end{theorem}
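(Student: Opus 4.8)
The plan is to reduce Theorem~\ref{gen} to an instance of the rectangular square‑root lattice theory of Section~\ref{sec:3} by a metaplectic change of variables, and then to match the constants. Using the intertwining relation $|V_gf(S(x,\omega))|=|V_{\hat Sg}(\hat Sf)(x,\omega)|$ with $g=\varphi$, and $\alpha S(\sqrt\Z)^2 = S\big(\alpha(\sqrt\Z)^2\big)$, statement~(1) is equivalent to
$$
|V_{\psi}(\hat Sf)(\mu)| = |V_{\psi}(\hat Sh)(\mu)| \quad\text{for every } \mu\in\alpha(\sqrt\Z)^2,\qquad \psi:=\hat S\varphi .
$$
Since $\hat S$ is a unitary bijection of $\lt$ and $f\sim h\Leftrightarrow \hat Sf\sim\hat Sh$, it suffices to prove that these phaseless samples force $\hat Sf\sim\hat Sh$. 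Thus the non‑separable square‑root lattice has been traded for the rectangular one $\alpha(\sqrt\Z)^2$ at the price of replacing the window $\varphi$ by $\psi$.

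Next I would identify $\psi$. By the classical description of the metaplectic action on Gaussians (e.g. Folland, or \cite[Ch.~9]{Groechenig}), $\psi$ equals a nonzero constant times a chirped Gaussian $t\mapsto e^{i\pi\zeta t^2}$ with $\operatorname{Im}\zeta>0$; writing $\zeta=\rho+i\sigma$ one reads off $\sigma$ and $|\rho|$ from the entries of $S$, and these are exactly the quantities encoded by $p=(a^2+b^2)^{-1}$ and $q=|ac+bd|$ in the statement. In particular $|\psi(x+iy)|=\mathrm{const}\cdot e^{-\pi\sigma x^2+\pi\sigma y^2-2\pi\rho xy}$, so $\psi$ has Gaussian decay on $\R$ and therefore (choosing $\mathfrak a<\pi\sigma$ and $\mathfrak b$ large enough to dominate the cross term) lies in some $\mathcal{O}^{\mathfrak b}_{\mathfrak a}(\C)$; hence Corollary~\ref{cor} is applicable to $\psi$. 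The role of the hypothesis $p-q>0$ is to guarantee positive‑definiteness of the quadratic form appearing in the next step.

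I would then run the estimate of Lemma~\ref{lma:stft_inclusion} for this chirped window, carrying the factor $e^{i\pi\rho t^2}$ through the parameter‑integral bound for $F(z,z')=\int_\R\langle F_t,T_z\psi_t\rangle e^{2\pi i z'\cdot t}\,dt$. This shows that for every $F\in\lt$ the spectrogram $|V_\psi F|^2$ extends to an entire function on $\C^2$ obeying a Gaussian‑type growth estimate; after composing with one further metaplectic operator on the time–frequency plane to bring the estimate into coordinate form — which sends $\alpha(\sqrt\Z)^2$ to another square‑root lattice with the same coordinatewise scale $\alpha$ — the extension of $|V_\psi F|^2$ lies in $\mathcal{O}^{c}(\C^2)$ with $c=\big(2\pi(p-q),\,2\pi(p+q)^{-1}\big)$. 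To finish, I would prove that this transformed square‑root lattice is a uniqueness set for $\mathcal{O}^{c}(\C^2)$, exactly as in the proof of Proposition~\ref{lma:uniqueness_complex_variables} but slicing along the lines the lattice determines: fixing the first lattice coordinate, the corresponding slice of an $F\in\mathcal{O}^c(\C^2)$ is a one‑variable function in the appropriate $\mathcal{O}^{\bullet}(\C)$‑class that vanishes on a translate of a scaled copy of $\sqrt\Z$; since $\liminf_{k\to\infty}\alpha\sqrt k/\sqrt k=\alpha$, re‑centering and absorbing the resulting exponential‑type factor into a slightly larger class (using the strict inequality in \eqref{bound_cond}), the one‑variable part of Proposition~\ref{lma:uniqueness_complex_variables} forces the slice to vanish; a second application in the remaining variable gives $F\equiv 0$. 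Reading off constants, the requirement on $\alpha$ is precisely $0<\alpha<\frac1{\sqrt{2\pi e}}\min\{(p-q)^{-1/2},(p+q)^{1/2}\}$. Consequently $|V_\psi(\hat Sf)|^2=|V_\psi(\hat Sh)|^2$ everywhere on $\R^2$, and Corollary~\ref{cor} yields $\hat Sf\sim\hat Sh$, hence $f\sim h$; the implication (2)$\Rightarrow$(1) is trivial.

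The main obstacle is the third step: faithfully tracking the chirp $e^{i\pi\rho t^2}$ through the estimate of Lemma~\ref{lma:stft_inclusion}, identifying the precise quadratic form governing the holomorphic growth of $|V_\psi F|^2$, and fixing the metaplectic normalisation so that this form diagonalises exactly to the pair $\big(2\pi(p-q),2\pi(p+q)^{-1}\big)$. That bookkeeping is the one genuinely new computation; the metaplectic reduction and the one‑variable uniqueness input are routine adaptations of results already established in the paper.
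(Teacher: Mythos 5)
Your opening reduction is exactly the paper's: the metaplectic intertwining relation turns phaseless sampling of $V_\varphi f$ on $\alpha S(\sqrt{\Z})^2$ into phaseless sampling of $V_{\hat S\varphi}(\hat Sf)$ on the rectangular square-root lattice $\alpha(\sqrt{\Z})^2$, with $\hat S\varphi$ a chirped Gaussian, and $f\sim h$ iff $\hat Sf\sim\hat Sh$. After that, however, your plan diverges from the paper and contains a genuine gap. The paper finishes in one stroke: it bounds the cross term in $|\hat S\varphi(x+iy)|$ by Young's inequality, obtaining the explicit membership $\hat S\varphi\in\mathcal{O}^{\pi(p+q)}_{\pi(p-q)}(\C)$, and then applies Theorem \ref{thm:main_result} directly to the window $\hat S\varphi$ and the rectangular lattice $\alpha\sqrt{\Z}\times\alpha\sqrt{\Z}$; condition \eqref{bound_cond} is read off as the hypothesis of that theorem for these parameters. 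You instead leave $\mathfrak a,\mathfrak b$ unspecified and propose to (i) rerun Lemma \ref{lma:stft_inclusion} with the chirp carried through, (ii) compose with a further metaplectic/linear change of variables on the time--frequency plane to diagonalize the resulting quadratic growth, claiming it sends $\alpha(\sqrt{\Z})^2$ to another square-root lattice ``with the same coordinatewise scale $\alpha$'', and (iii) reprove a uniqueness-set statement for that image set. Step (ii) is unjustified and generically false: a non-diagonal linear map destroys the product structure $\alpha\sqrt{\Z}\times\alpha\sqrt{\Z}$ (this is precisely the obstruction the whole construction is designed to avoid), while a diagonal map cannot remove the time--frequency cross terms created by the chirp; consequently the slicing argument of Proposition \ref{lma:uniqueness_complex_variables}, which needs a Cartesian product of symmetric one-dimensional sets, does not apply to the transformed set as described.

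Moreover, the decisive quantitative claim --- that the relevant growth class is $\mathcal{O}^{c}(\C^2)$ with $c=\bigl(2\pi(p-q),\,2\pi(p+q)^{-1}\bigr)$, which is what makes the threshold in \eqref{bound_cond} appear --- is exactly the computation you explicitly postpone (``the one genuinely new computation''). It cannot be obtained by citing Lemma \ref{lma:stft_inclusion} for a window in $\mathcal{O}_{\pi(p-q)}^{\pi(p+q)}(\C)$: that lemma places the window's complex growth parameter in the time direction and its real-line decay parameter in the frequency direction, yielding $c=\bigl(2\pi(p+q),\,2\pi/(p-q)\bigr)$ rather than your pair, so your constants are asserted to match the theorem rather than derived. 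The repair is to drop steps (ii)--(iii) entirely, make the Young-inequality absorption explicit so that $\hat S\varphi$ lands in a concrete class $\mathcal{O}_{\mathfrak a}^{\mathfrak b}(\C)$ with $\mathfrak a,\mathfrak b$ written in terms of $p$ and $q$, and then invoke Theorem \ref{thm:main_result} verbatim for the rectangular lattice; the metaplectic reduction, the invariance of $\sim$ under $\hat S$, and the endgame via Corollary \ref{cor} in your outline are sound.
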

\begin{proof}
    Since $S \in \mathrm{SL}_2(\R)$, it follows that the condition
    $$
    |V_\varphi f(\lambda)| = |V_\varphi h(\lambda)|, \quad \lambda \in \alpha S (\sqrt{\Z})^{2}
    $$
    is equivalent to
    \begin{equation}\label{ididid}
        |V_{\hat S \varphi} (\hat S f)( \gamma )| = |V_{\hat S \varphi} (\hat S h)( \gamma )|, \quad \gamma \in \alpha (\sqrt{\Z})^{2}.
    \end{equation}
    According to \cite[Proposition 252]{gosson}, there exists a constant $C \in \C \setminus \{ 0 \}$ such that
    $
    \hat S \varphi(t) = Ce^{-\pi c t^2},
    $
    where $$
    c=(a^2+b^2)^{-1}(1+i(ac+bd)) = p(1+i(ac+bd)).
    $$
    Thus, for $x,y \in \R$ we have
    $$
    |\hat S \varphi(x+iy)| \lesssim e^{-\pi p (x^2 - y^2 - 2xy(ac+bd))} \leq e^{-\pi p x^2}e^{\pi p y^2} e^{2\pi x y q}.
    $$
    Using Young's inequality we obtain the bound
    $$
    |\hat S \varphi(x+iy)| \lesssim e^{-\pi (p-q) x^2}e^{\pi (p+q) y^2}.
    $$
    This shows that $\hat S \varphi \in \mathcal{O}^{\pi(p+q)}_{\pi(p-q)}(\C)$. Combining Theorem \ref{thm:main_result} with condition \eqref{bound_cond} and identity \eqref{ididid}, implies that $\hat S f \sim \hat S h$. Since the equivalence relation "$\sim$" is invariant under applications of invertible linear operators, it follows that $f \sim h$.
\end{proof}

The latter theorem provides a uniqueness statement for the STFT phase retrieval problem via sampling on lattices that are deformed under the action of a matrix in $\mathrm{SL}(2,\R)$. For instance, it covers the important class of rotation matrices and shear matrices.

\begin{example}[Rotated square-root lattices]
    Consider the rotation matrix $R_\theta \in \R^{2 \times 2}, \, \theta \in \R$, given by
\begin{equation}\label{eq:rotationmatrix}
    R_\theta \coloneqq \left( {\begin{array}{cc}
   \cos \theta & -\sin \theta \\
   \sin\theta & \cos\theta \\
  \end{array} } \right).
\end{equation}
Then $R_\theta \in \mathrm{SL}(2,\R)$ and the corresponding metaplectic operator $\widehat{R_\theta}$ is the so-called fractional Fourier transform of order $\theta$ \cite{gossonLuef,Namias}. For $S=R_\theta$, the constants $p$ and $q$ in Theorem \ref{gen} satisfy $p=1$ and $q=0$. Thus, for every $\theta \in \R$, every $0 < \alpha < \sqrt{\tfrac{1}{2\pi e}}$, and every $f,h \in \lt$ the following statements are equivalent:
\begin{enumerate}
    \item $|V_\varphi f(\lambda)| = |V_\varphi h(\lambda)|$ for every $\lambda \in \alpha R_\theta (\sqrt{\Z})^{2}$,
    \item $f \sim h$.
\end{enumerate}
We notice, that the previous equivalence is valid if $\varphi$ gets replaced by an arbitrary Hermite basis function. This follows from the fact, that Hermite functions are eigenfunctions for the Fractional Fourier transform corresponding to an eigenvalue $\nu \in \T$ \cite{Jaming,Namias}. 
\end{example}

\begin{example}[Sheared square-root lattices]
    Let $\sigma \in \R$, and let $A_\sigma \in \mathrm{SL}_2(\R)$ be given by
    $$
    A_\sigma \coloneqq \begin{pmatrix}
        1 & \sigma \\
        0 & 1
    \end{pmatrix}.
    $$
    Then $A_\sigma$ is a shear parallel to the $x$-axis. In this case, the values $p$ and $q$ of Theorem \ref{gen} are given by
    $$
    p = \frac{1}{1+\sigma^2}, \quad q=|\sigma|.
    $$
    It holds that $p-q > 0$ if and only if $\sigma \in (-r,r)$ where $r \approx 0.68$. For such $\sigma$, Theorem \ref{gen} shows uniqueness of the STFT phase retrieval problem with Gaussian window $\varphi(t)=e^{-\pi t^2}$ via sampling on a square root lattice generated by the shear matrix $\alpha A_\sigma$ where $\alpha>0$ satisfies \eqref{bound_cond}. Such a square-root lattice is depicted in Figure \ref{fig:2} for $\sigma=\frac{1}{2}$.
\end{example}

\begin{figure}
 \centering
 \hspace*{-1cm}
   \includegraphics[width=14cm,trim={0 1.5cm 0 0},clip]{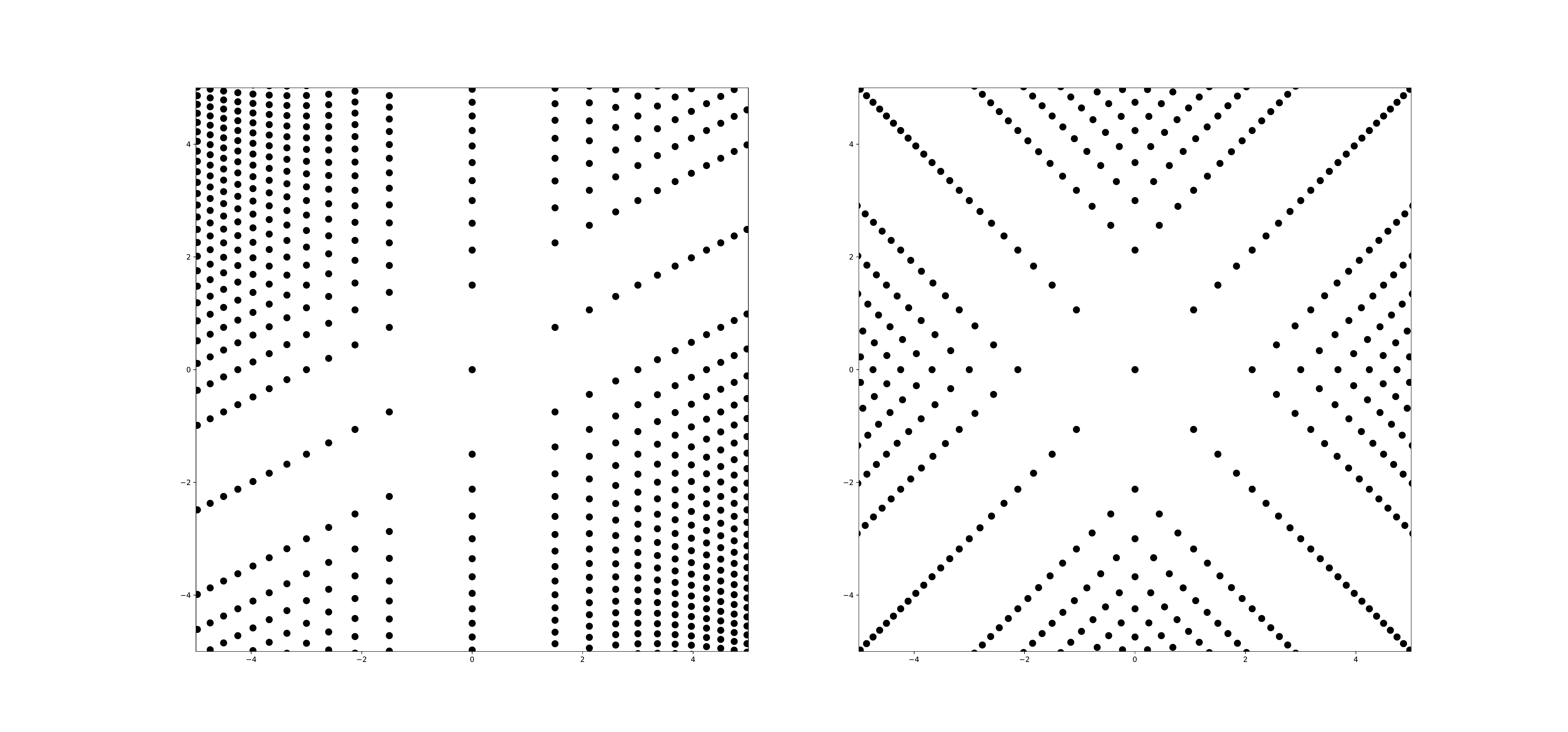}
 \caption{The left figure depicts a sheared square-root lattice. The right figure depicts a rotated square-root lattice.}
 \label{fig:2}
 \end{figure}

We end the paper with several remarks regarding comparisons to related work and applications.

\begin{remark}[Analyticity of the short-time Fourier transform]

Let $g$ be a window function and let $V_gf(x,\omega)$ be the STFT of $f$ with respect to $g$, evaluated at $(x,\omega) \in \R^{2d}$. For simplicity we assume in this remark that $d=1$. If we write $z=x+i\omega \in \C$, then the STFT gives rise to the map
\begin{equation}\label{eq:compl_id}
    V_gf : \C \to \C, \quad z=x+i\omega \mapsto V_gf(x,\omega).
\end{equation}
If $g$ is a Gaussian window function, then, modulo a multiplication by a non-zero weighting factor, Equation \eqref{eq:compl_id} defines an antiholomorphic function. Therefore, this case directly allows for the application of techniques from complex analysis and constitutes the main reason for the choice of Gaussian windows in most articles on STFT phase retrieval. A classical result of Asenci and Bruna shows that the Gaussian is essentially the only function which renders $V_gf$ into an analytic function in the above sense \cite[Theorem 2.1]{4839036}. Observe that the results obtained in the present article hold for a variety of window functions which are different from Gaussians (in fact, $\mathcal{O}_a^b(\Cd)$ is dense in $\ltd$ for $a<b$, as stated in Proposition \ref{prop:O_properies}). Our techniques therefore do not rely on the intimate relation to analytic functions induced by Gaussian windows.
\end{remark}

\begin{remark}[Support constraints]
Corollary \ref{cor:polynomial_times_gauss} states that if the window function is the product of a Gaussian with an entire function of exponential type, then every square-integrable function $f \in \ltd$ is determined up to a global phase by its phaseless STFT samples located on a separable square-root lattice. For precisely this choice of a window function, it is shown in \cite{grohsLiehrShafkulovska}, that ordinary separable lattices ($\Lambda = \alpha \Z^d \times \beta \Z^d$) yield uniqueness in classes of \emph{compactly supported} functions. Specifically, if $K \subseteq \Rd$ is a compact set and $\varphi$ is chosen as in Corollary \ref{cor:polynomial_times_gauss}, then there exist $\alpha,\beta>0$ such that the following two statements are equivalent for every $f,h \in L^2(K)$ (we stress the support constraint on $f$ and $h$):
\begin{enumerate}
    \item $|V_\varphi f(\lambda)| = |V_\varphi h(\lambda)|$ for every $\lambda \in \alpha \Z^d \times \beta \Z^d$,
    \item $f \sim h$.
\end{enumerate}
Because of the discretization constraints derived in \cite{grohsLiehrJFAA}, achieving an equivalence of the same nature becomes unattainable when substituting $L^2(K)$ with $\ltd$ and employing phaseless sampling on regular lattices. However, the findings in the current paper come into play in this scenario and demonstrate that uniqueness can be achieved by substituting the regular lattice with a square-root lattice.
\end{remark}

\begin{remark}[Applications and algorithm design]
Phase retrieval is a fundamental problem that arises in various fields, including coherent diffraction imaging, audio processing, and quantum mechanics. In these applications, only discrete samples are available, due to the limitations of digital devices, giving rise to the uniqueness problem investigated in the present article. While phase retrieval from the absolute value of the STFT on the entire time-frequency plane $\R^{2d}$ is well-documented, the uniqueness question becomes a substantial hurdle when only discrete samples are accessible. The outcomes of the present paper reveal the first uniqueness results for this problem in situations where no constraints on the underlying function space are imposed, shedding light on a previously unexplored aspect of phase retrieval. The square-root lattice scheme is designed to address the challenges posed by the uniqueness problem from discrete samples. The sampling points of this scheme exhibit the behavior of becoming more clustered the further one moves away from the origin in the time-frequency plane. This observation not only implies the potential benefits of denser sampling away from the origin in practical applications, but also underscores the importance of incorporating a notion of irregularity in sampling positions when designing new algorithms for phase retrieval. Unlike previous approaches that predominantly focused on lattice-based sampling, this insight suggests that embracing sampling irregularities can lead to more robust and effective phase retrieval algorithms tailored to real-world scenarios. Our results therefore provide a promising avenue for further research and practical applications in the aforementioned fields.
\end{remark}

\bibliographystyle{abbrv}
\bibliography{bibfile}

\end{document}